\documentclass[10pt, english]{amsart}

\usepackage{amsmath,amssymb,enumerate}

\usepackage[T1]{fontenc}
\usepackage[all]{xy}

\usepackage{babel}
\usepackage{amstext}
\usepackage{amsmath}
\usepackage{amsfonts}
\usepackage{latexsym}
\usepackage{ifthen}

\usepackage{xypic}
\xyoption{all}
\pagestyle{plain}

\newcommand{\E}{{\mathcal E}}

\newcommand{\rk}{{\rm rk}}
\newcommand{\codim}{{\rm codim}}

\newtheorem{lemma1}{}[section]

\newenvironment{lemma}{\begin{lemma1}{\bf Lemma.}}{\end{lemma1}}

\newenvironment{theorem}{\begin{lemma1}{\bf Theorem.}}{\end{lemma1}}
\newenvironment{proposition}{\begin{lemma1}{\bf Proposition.}}{\end{lemma1}}
\newenvironment{corollary}{\begin{lemma1}{\bf Corollary.}}{\end{lemma1}}
\newenvironment{remark}{\begin{lemma1}{\bf Remark.}\rm}{\end{lemma1}}
\newenvironment{remarks}{\begin{lemma1}{\bf Remarks.}\rm}{\end{lemma1}}
\newenvironment{definition}{\begin{lemma1}{\bf Definition.}}{\end{lemma1}}
\newenvironment{notation}{\begin{lemma1}{\bf Notation.}}{\end{lemma1}}

\newenvironment{remark*}{{\bf Remark.}}{}
\newenvironment{example*}{{\bf Example.}}{}
\newenvironment{assumption*}{{\bf Assumption.}}{}

\newcommand{\R}{\ensuremath{\mathbb{R}}}
\newcommand{\Q}{\ensuremath{\mathbb{Q}}}

\newcommand{\N}{\ensuremath{\mathbb{N}}}
\newcommand{\PP}{\ensuremath{\mathbb{P}}}

\usepackage{eurosym}

\newcommand{\holom}[3]{\ensuremath{#1:#2  \rightarrow #3}}
\newcommand{\fibre}[2]{\ensuremath{#1^{-1} (#2)}}

\makeatletter
\ifnum\@ptsize=0 \addtolength{\hoffset}{-0.3cm} \fi \ifnum\@ptsize=2 \addtolength{\hoffset}{0.5cm} \fi \sloppy


\newcommand\sE{{\mathcal E}}

\newcommand\sF{{\mathcal F}}
\newcommand\sG{{\mathcal G}}
\newcommand\sH{{\mathcal H}}
\newcommand\sI{{\mathcal I}}

\newcommand\sO{{\mathcal O}}

\DeclareMathOperator*{\nons}{nons}

\setcounter{tocdepth}{1}

 \setlength{\parindent}{0pt}
\setlength{\parskip}{\smallskipamount}

\title{Algebraic integrability of foliations with numerically trivial canonical bundle} 
\date{April 19, 2018}

\author{Andreas H\"oring}
\author{Thomas Peternell}

\address{Andreas H\"oring, Universit\'e C\^ote d'Azur, CNRS, LJAD, France}
\email{Andreas.Hoering@unice.fr}

\address{Thomas Peternell, Mathematisches Institut, Universit\"at Bayreuth, 95440 Bayreuth, 
Germany}
\email{thomas.peternell@uni-bayreuth.de}

\subjclass[2010]{14J32, 37F75, 14E30}
\keywords{MMP, minimal models, algebraic integrability, positivity of vector bundles}

\begin{document}

\begin{abstract} 
Given a reflexive sheaf on a mildly singular projective variety, we prove 
a flatness criterion under certain stability conditions. 
This implies the algebraicity of leaves for sufficiently stable foliations with numerically trivial canonical bundle
such that the second Chern class does not vanish.
Combined with the recent work of Druel and Greb-Guenancia-Kebekus this establishes
the Beauville-Bogomolov decomposition for minimal models with trivial canonical class. 
\end{abstract}

\newpage

\maketitle

\section{introduction} 

\subsection{Main result}
Let $X$ be a normal complex projective variety that is smooth in codimension two, and let $\sE$ be a reflexive
sheaf on $X$. 
 If $\sE$ is slope-stable with respect to some ample divisor $H$  of slope $\mu_H(\sE)=0$,
then a famous result of Mehta-Ramanathan \cite{MR84} says that the restriction $\sE_C$ to a general complete
intersection $C$ of {\em sufficiently ample} divisors is stable and nef.
On the other hand the variety $X$ contains many dominating families of irreducible curves to which 
Mehta-Ramanathan does not apply; therefore one expects that, apart from very special situations, $\sE_C$ will not be
nef for many curves $C$. 
If $\sE$ is locally free, denote by $\holom{\pi}{\PP(\sE)}{X}$ the projectivisation of $\sE$
and by $\zeta:=c_1(\sO_{\PP(\sE)}(1))$ the tautological class on $\PP(\sE)$. The nefness of $\sE_C$ then translates into the nefness of the restriction of $\zeta$ to $\PP(\sE_C)$.
Thus, the stability of $\sE$ implies some positivity of the tautological class $\zeta$. 
On the other hand, the non-nefness of $\sE_C$ on many curves can be rephrased by saying that 
the tautological class $\zeta$ should not be pseudoeffective. 
The first main result of this paper confirms this expected picture under some additional 
stability condition.

\begin{theorem} \label{theoremmain}
Let $X$ be a normal $\Q$-factorial projective variety of dimension $n$ with at most klt singularities.
Suppose that $X$ is smooth in codimension two. 
Let $H$ be an ample line bundle on $X$, and let 
$\sE$ be a reflexive sheaf of rank $r$ on $X$ such that $c_1(\sE) \cdot H^{n-1}=0$. 
Suppose that
\begin{enumerate}[(a)]
\item the reflexive symmetric powers $S^{[l]} \sE$ are $H$-stable for every $l \in \N$, and
\item the algebraic holonomy group of $\sE$ (cf. Definition \ref{def:holonomy}) is connected.
\end{enumerate}
Suppose further that $\sE$ is pseudoeffective (cf. Definition \ref{definitionpseff}). Then 
$$
c_2(\sE) \cdot H^{n-2}=0.
$$
Moreover, there exists a finite Galois cover $\holom{\nu}{\tilde X}{X}$, \'etale in codimension one, 
such that the reflexive pull-back $\nu^{[*]} \sE$ is a numerically flat vector bundle; in particular, $c_2(\nu^{[*]}\sE ) = 0$.
If $X$ is smooth, then $\sE$ itself is a numerically flat vector bundle.
\end{theorem}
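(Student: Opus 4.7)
The plan is to analyse the projectivisation $\pi : Y := \PP(\sE) \to X$ (well defined over the locally free locus of $\sE$, whose complement has codimension $\geq 3$) together with its tautological class $\zeta := c_1(\sO_Y(1))$. By Definition \ref{definitionpseff}, pseudoeffectivity of $\sE$ is pseudoeffectivity of $\zeta$. The strategy is to upgrade pseudoeffectivity of $\zeta$ to nefness; once $\sE$ is nef with $c_1(\sE) \cdot H^{n-1} = 0$, the equality $c_2(\sE) \cdot H^{n-2} = 0$ and numerical flatness will follow from the Demailly--Peternell--Schneider circle of ideas.

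As preparation, the Mehta--Ramanathan restriction theorem applied to a general complete intersection curve $C \subset X$ cut out by high multiples of $H$ shows that $\sE|_C$ is stable of slope zero, hence polystable and numerically flat on $C$ by Narasimhan--Seshadri; in particular $\zeta$ is nef on $\pi^{-1}(C) = \PP(\sE|_C)$.

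The crux is to show nefness of $\zeta$ on the whole of $Y$. Apply the divisorial Zariski decomposition $\zeta = P + N$, with $P$ movable and $N = \sum a_i D_i$ effective, and show $N = 0$. From $\zeta^r \cdot (\pi^* H)^{n-1} = \pi_\ast(\zeta^r) \cdot H^{n-1} = -c_1(\sE) \cdot H^{n-1} = 0$ and $\zeta = P + N$ one gets
\begin{equation*}
P \cdot \zeta^{r-1} \cdot (\pi^* H)^{n-1} \;+\; N \cdot \zeta^{r-1} \cdot (\pi^* H)^{n-1} \;=\; 0.
\end{equation*}
For a general $C$ as above, $P$ restricts to a pseudoeffective class on $\pi^{-1}(C)$ while $\zeta^{r-1}|_{\pi^{-1}(C)}$ is a nef $1$-cycle class; hence $P \cdot \zeta^{r-1} \cdot (\pi^* H)^{n-1} \geq 0$. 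On the other hand, each $\pi$-horizontal prime component of $N$ has class $d_i \zeta + \pi^* L_i$ with $d_i > 0$ and corresponds to a nonzero section of $S^{[d_i]} \sE \otimes L_i$; stability of $S^{[d_i]} \sE$ at slope zero (hypothesis (a)) forces $L_i \cdot H^{n-1} > 0$, and a Segre class computation yields $D_i \cdot \zeta^{r-1} \cdot (\pi^* H)^{n-1} = L_i \cdot H^{n-1} > 0$. Each $\pi$-vertical prime $D_i = \pi^{-1}(E_i)$ contributes $E_i \cdot H^{n-1} > 0$ by ampleness of $H$ and effectivity of $E_i$. Summing, $N \cdot \zeta^{r-1} \cdot (\pi^* H)^{n-1} > 0$ unless $N = 0$; combined with the identity and the sign of the $P$-term, this forces $N = 0$, and $\zeta$ is nef.

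With $\sE$ nef and $c_1(\sE) \cdot H^{n-1} = 0$, the nef class $c_1(\sE) = \det \sE$ satisfies $c_1(\sE)^2 \cdot H^{n-2} \leq 0$ by Hodge index, while nefness gives $c_1(\sE)^2 \cdot H^{n-2} \geq 0$; hence $c_1(\sE) \equiv 0$ numerically. The Demailly--Peternell--Schneider criterion (adapted to the klt, smooth-in-codimension-two setting by Greb--Kebekus--Peternell) then gives numerical flatness of $\sE$, hence $c_2(\sE) \cdot H^{n-2} = 0$. For the finite Galois cover, I would invoke the structure theory of numerically flat reflexive sheaves on klt varieties: such a sheaf arises from a representation of the étale fundamental group of the regular locus, and after a finite cover étale in codimension one, made Galois by passing to a Galois closure, the reflexive pull-back becomes a genuine numerically flat vector bundle; hypothesis (b) enters here to ensure the holonomy-theoretic structure is compatible with the cover. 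When $X$ is smooth, purity for fundamental groups together with the filtration of numerically flat bundles by numerically trivial line bundles makes $\sE$ itself locally free and numerically flat without any cover. The main obstacle in this plan is the third paragraph: ensuring that the divisorial Zariski decomposition and the movability of $P$ behave well on the possibly singular $Y$, and that restrictions of movable divisor classes to general $\pi^{-1}(C)$ remain pseudoeffective. The strong hypothesis that \emph{every} reflexive symmetric power is stable is essential for ruling out horizontal components of $N$.
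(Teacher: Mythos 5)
Your reduction at the end (Hodge index to get $c_1\equiv 0$ on a general surface, DPS/Simpson for numerical flatness, \cite{GKP16b} for the quasi-\'etale cover) matches the paper, and your computation ruling out divisorial components of the negative part is sound: an effective horizontal divisor $D_i\sim d_i\zeta+\pi^*L_i$ gives $L_i^{-1}\hookrightarrow S^{[d_i]}\sE$, stability forces $L_i\cdot H^{n-1}>0$, and the intersection with $\zeta^{r-1}\cdot(\pi^*H)^{n-1}$ is positive. This is essentially the part of the argument already present in Druel's earlier work.

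The genuine gap is the sentence ``this forces $N=0$, and $\zeta$ is nef.'' Vanishing of the negative part of the \emph{divisorial} Zariski decomposition only makes $\zeta$ movable (modified nef); it says nothing about the components of the non-nef locus $B_-(\zeta)$ of codimension $\geq 2$, which can perfectly well dominate $X$ even when $N=0$. Movable classes need not be nef, and this is precisely the difficulty the paper is written to overcome: its stated key point is to control irreducible components of $B_-(\zeta)$ of \emph{arbitrary} codimension. The paper does this by an induction on the codimension $k$ of components $Z\subset B_-(\zeta)$, using Nakayama's asymptotic vanishing order $\sigma_Z$ along higher-codimensional $Z$ (Lemma \ref{lemmacomputeclass}) to produce the inequality $\zeta_C^r - a\,\zeta_C^{r-k}\cdot(Z\cap\pi^{-1}(C))\geq 0$ on $\pi^{-1}(C)$ for an MR-general curve $C$, whence $\zeta_C^{r-k}\cdot(Z\cap\pi^{-1}(C))=0$; then Lemma \ref{lemmacurve} / Proposition \ref{propositioncurve} (where stability of \emph{all} symmetric powers $S^l\sE_C$ is really used, for subvarieties of every dimension, not just divisors) produces a curve $C'\subset Z\cap\pi^{-1}(C)$ with $\zeta\cdot C'=0$ that is \emph{\'etale} over $C$, and hypothesis (b) on connected holonomy is then used to show $f^*\sE_C$ stays stable under this \'etale cover, giving the contradiction. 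In your proposal hypothesis (b) never enters the core argument, which is a symptom of the missing step. Note also that even the paper does not conclude global nefness of $\zeta$; it only shows every component of $B_-(\zeta)$ maps into codimension $\geq 2$, restricts to a general surface $S$, and applies Paun's criterion to get nefness of $\zeta_S$ --- so your surface-restriction endgame would still need the full control of $B_-(\zeta)$, not just $N=0$.
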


Nakayama \cite[Thm.B]{Nak99b} and Druel \cite[Thm.6.1]{Dru17} obtained similar results for vector bundles
of small rank. The recent progress on algebraic integrability of foliations by Campana-P\v aun \cite{CP15} and Druel \cite{Dru17}
yields an immediate application:

\begin{corollary} \label{cor:leaves} 
Let $X$ be a simply connected projective manifold, and let $\sF \subset T_X$ be an integrable reflexive subsheaf. Suppose there exists
an ample line bundle $H$ on $X$ such that $S^{[l]} \sF$ is $H-$stable for all $l \in \N$.
If $c_1(\sF)=0$ and $c_2(\sF) \neq 0$, then $\sF$ has algebraic leaves. 
\end{corollary}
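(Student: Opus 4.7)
The plan is to prove the contrapositive: assume $\sF$ is not algebraically integrable, and deduce $c_2(\sF) = 0$, contradicting the hypothesis.

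Step 1: Verify the hypotheses of Theorem \ref{theoremmain} for the sheaf $\sF$ on $X$. Smoothness of $X$ automatically supplies the singularity assumptions ($\Q$-factorial, klt, smooth in codimension two), and $c_1(\sF) \cdot H^{n-1} = 0$ follows from $c_1(\sF) = 0$. Hypothesis (a) on stability of the reflexive symmetric powers $S^{[l]} \sF$ is given. For hypothesis (b), the connectedness of the algebraic holonomy group of $\sF$ should be a consequence of the simple connectedness of $X$: the component group of the algebraic holonomy is, by the construction in Definition \ref{def:holonomy}, detected by finite étale covers of $X$ over which $\sF$ acquires a non-trivial decomposition, and such covers must be trivial when $\pi_1(X) = 1$.

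Step 2: Produce pseudoeffectivity of $\sF$ in the sense of Definition \ref{definitionpseff}. This is where the recent work of Campana-P\v{a}un \cite{CP15} and Druel \cite{Dru17} enters. The relevant algebraic integrability criterion extracted from their methods reads: an integrable reflexive subsheaf $\sF \subset T_X$ with $c_1(\sF) = 0$ whose tautological class on $\PP(\sF)$ fails to be pseudoeffective must have algebraic leaves. Taking the contrapositive under the assumption that $\sF$ has no algebraic leaves delivers pseudoeffectivity of $\sF$.

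Step 3: Apply Theorem \ref{theoremmain}. All hypotheses are now in place, and since $X$ is smooth the theorem further asserts that $\sF$ itself is a numerically flat vector bundle; in particular $c_2(\sF) = 0$, contradicting $c_2(\sF) \neq 0$.

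The main obstacle is Step 2: the Campana-P\v{a}un / Druel integrability criterion is the deepest ingredient, and one must check that their hypotheses are compatible with the stability assumption on all symmetric powers made here (this stability is also what prevents $\sF$ from splitting off a subfoliation that could be handled separately). Step 1's reduction of the holonomy connectedness to simple connectedness should be routine but requires unpacking the definition of algebraic holonomy; it is the only place where the assumption $\pi_1(X) = 1$ is used, and it streamlines the conclusion of Theorem \ref{theoremmain} by making the Galois cover $\tilde X \to X$ trivial so that no reflexive pullback is needed.
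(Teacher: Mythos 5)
Your overall architecture is the paper's: reduce to a pseudoeffectivity statement via the Campana--P\u{a}un/Druel integrability criterion, get holonomy connectedness from $\pi_1(X)=1$ (the paper cites \cite[Prop.4]{BK08} for exactly your Step 1 claim), and then invoke Theorem \ref{theoremmain} to force $c_2=0$. However, Step 2 contains a genuine error: you have the duality backwards. The criterion extracted from \cite{CP15} and \cite[Prop.8.4]{Dru17} says that a foliation $\sF\subset T_X$ with $c_1(\sF)=0$ is algebraically integrable provided the \emph{dual} $\sF^*$ is not pseudoeffective, not $\sF$ itself. The reason is that algebraicity of leaves requires positivity of $\sF$ along some movable class, i.e.\ $\mu_{\alpha,\min}(\sF)>0$, which is equivalent to $\mu_{\alpha,\max}(\sF^*)<0$; sections of $S^{[i]}\sF^*\otimes H^{\otimes j}$ with $i\gg j$ (Definition \ref{definitionpseff} applied to $\sF^*$) are precisely the obstruction to finding such a class. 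Pseudoeffectivity of $\sF$ itself is not the relevant obstruction, and for a general stable $\sF$ with $c_1(\sF)=0$ there is no reason for the pseudoeffectivity of $\sF$ and of $\sF^*$ to be equivalent (they coincide for symplectic holonomy, where $\sF\simeq\sF^*$, but not for, say, the tangent sheaf of a Calabi--Yau threefold). So the statement you take as your Step 2 input is not what Campana--P\u{a}un/Druel prove.

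The fix is short and restores the paper's argument: assuming $\sF$ has no algebraic leaves, conclude that $\sF^*$ is pseudoeffective, and apply Theorem \ref{theoremmain} to $\sF^*$ rather than to $\sF$. The hypotheses still hold for the dual: $S^{[l]}\sF^*\simeq (S^{[l]}\sF)^{**\,*}$ is stable whenever $S^{[l]}\sF$ is, and the algebraic holonomy of $\sF^*$ is again connected since $X$ is simply connected. Since $X$ is smooth, $\sF^*$ is then numerically flat, hence $c_2(\sF)=c_2(\sF^*)=0$ by \cite{DPS94}, contradicting the hypothesis. Everything else in your proposal (the contrapositive structure, Step 1, Step 3) matches the paper once this substitution is made.
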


By \cite[Prop.5]{BK08} the stability of all the $S^{[l]} \sF$ is equivalent to assuming that $\sF$ is stable
and the algebraic holonomy group is $\mbox{SL}(F_x)$ or $\mbox{Sp}(F_x)$.
It seems possible that the stability of $\sF$ is enough to imply the algebraicity of leaves (cf. 
also  \cite{Tou08, LPT11, LPT13, PT13} for classification results of foliations with $c_1(\sF)=0$).

The proof of Theorem \ref{theoremmain} is surprisingly simple. Druel's proof \cite[Thm.6.1]{Dru17} uses the
stability of $\sE$ to describe the components of the restricted base locus $B_-(\zeta)$ (see Section \ref{subsectionrestricted}) that are divisors or
generically finite over the base. Our key observation is that
the systematic use of the symmetric powers $S^{[l]} \sE$ allows to control irreducible components of $B_-(\zeta)$
of any codimension. An intersection computation essentially reduces Theorem \ref{theoremmain} to the following:

\begin{proposition} \label{propositioncurve}
Let $C$ be a smooth projective curve. Let $\sE$ be a vector bundle
on $C$ such that $c_1(\sE)=0$. Suppose that the symmetric powers $S^l \sE$ are stable for every $l \in \N$.
Then, given any integer $ 0 \leq d  \leq \dim \PP(\sE),$ the intersection number 
$$
\zeta^d \cdot Z>0
$$
for every subvariety $Z \subset \PP(\sE)$ of dimension $d$.
\end{proposition}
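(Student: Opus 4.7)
The plan is to prove strict positivity by reducing to the curve case $d = 1$ and then handling it via the stability of all symmetric powers. The preliminary observation is that $\zeta$ is nef: since $\sE$ is semistable of degree zero on the smooth curve $C$, it is nef by the Miyaoka--Hartshorne criterion, hence $\zeta$ is nef on $\PP(\sE)$ and $\zeta^{d} \cdot Z \ge 0$ holds automatically; only strict positivity remains. If $Z$ is contained in a fiber $\pi^{-1}(p) \cong \PP^{r-1}$, then $\zeta|_{\pi^{-1}(p)}$ is ample and $\zeta^{d} \cdot Z = \deg Z > 0$; so I may assume $\pi|_{Z}$ is surjective.

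For $d = 1$, let $\nu : \tilde Z \to Z$ be the normalisation and $g := \pi \circ \nu : \tilde Z \to C$. The inclusion factors through a section of $\PP(g^{*}\sE) \to \tilde Z$, giving a surjection $g^{*}\sE \twoheadrightarrow L$ on $\tilde Z$ with $\deg L = \zeta \cdot Z$. Semistability of $g^{*}\sE$ gives $\deg L \ge 0$, and equality would mean $L$ destabilises $g^{*}\sE$. By \cite[Prop.~5]{BK08}, the hypothesis that all $S^{l}\sE$ are stable is equivalent to $\sE$ being stable with algebraic holonomy group $\mathrm{SL}(r)$ or $\mathrm{Sp}(r)$. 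These are connected simple algebraic groups, so a Zariski-density argument (the Zariski closure of a finite-index subgroup of a dense subgroup of a connected simple group is the whole group) shows the algebraic holonomy is preserved under finite pullback; hence $g^{*}\sE$ is stable on $\tilde Z$, ruling out $\deg L = 0$.

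For $d \ge 2$, I reduce to the curve case by producing a curve $B \subset Z$ dominating $C$ with $\zeta \cdot B = 0$. Resolve $Z$ via $\sigma : W \to Z$ and set $g := \pi \circ \iota \circ \sigma$, $L := \sigma^{*}\iota^{*}\sO(1)$, $F$ a general fiber of $g$. Then $L$ is nef with $L^{d} = 0$, $L^{d-1} \cdot F = (L|_{F})^{d-1} > 0$ (because $L|_{F}$ is the restriction of $\sO_{\PP^{r-1}}(1)$ to an embedded subvariety), and $F^{2} = 0$. Hence $(L - \epsilon F)^{d} = -d\epsilon L^{d-1} \cdot F + O(\epsilon^{2}) < 0$ for small $\epsilon > 0$, so $L - \epsilon F$ is not pseudoeffective. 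By the Boucksom--Demailly--P\v aun--Peternell duality there exists a nonzero movable curve class $\alpha$ on $W$ with $L \cdot \alpha = 0$; the ampleness of $L|_{F}$ forces $F \cdot \alpha > 0$, since any movable class supported in fibers of $g$ has strictly positive $L$-intersection. Approximating $\alpha$ by strongly-movable classes of the form $\mu_{*}(A^{d-1})$ and using the integrality of $L \cdot B$ for actual curves $B$ yields an irreducible curve $B \subset W$ with $L \cdot B = 0$ dominating $C$, whose image $\sigma(B) \subset \PP(\sE)$ contradicts the case $d = 1$.

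The main obstacle is the stability-preservation step in the $d = 1$ argument when $g : \tilde Z \to C$ is ramified: the Zariski-density argument is immediate for étale $g$, while for ramified $g$ one must work on the étale locus and verify that a hypothetical destabilising subsheaf extends across the ramification points without destroying the algebraic holonomy condition. A subsidiary technical point is the passage from the movable class provided by Boucksom--Demailly--P\v aun--Peternell to an actual irreducible curve, which uses integrality of intersection numbers together with the compactness of a suitable slice of the movable cone.
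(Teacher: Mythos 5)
Your reduction of the case $d\ge 2$ to the case $d=1$ contains a fatal gap: from a movable class $\alpha$ on $W$ with $L\cdot\alpha=0$ you cannot extract an irreducible curve $B$ with $L\cdot B=0$. A nef class can annihilate a movable class (equivalently, fail to be big) while having strictly positive degree on \emph{every} irreducible curve; Mumford's example is precisely of this kind, and indeed the class $\zeta$ on $\PP(\sE)$ itself, under the hypotheses of the Proposition, is nef with $\zeta^{\,r}=0$ yet is supposed to be positive on all curves. So the very statement you are trying to prove shows that "nef, not big $\Rightarrow$ some contracted curve" is false here. The integrality argument does not rescue this: the approximating $1$-cycles $\mu_*(A^{d-1})$ have $L$-degree a nonnegative integer that need only be small \emph{relative to their $h$-degree}, which tends to infinity, so no individual component is forced to have $L$-degree zero. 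Note also that your $d\ge 2$ step uses only the nefness of $\zeta$ and never the stability of $S^l\sE$ for $l\ge 2$, which is a strong sign it cannot work. The missing idea, which is the heart of the paper's proof (Lemma \ref{lemmacurve}), is that stability of $S^l\sE$ upgrades nefness to \emph{semiampleness} on $Z$: pushing forward $0\to\sI_Z(l)\to\sO_{\PP(\sE)}(l)\to\sO_Z(l)\to 0$ exhibits $\varphi_*(\sO_Z(l))$ as a proper nef quotient of $S^l\sE$, which must be ample since a degree-zero quotient would destabilise $S^l\sE$; then $\sO_Z(l)$, being a quotient of $\varphi^*\varphi_*(\sO_Z(l))$, is semiample, and the associated fibration turns $\zeta^d\cdot Z=0$ into honest contracted curves (which are moreover \'etale over $C$, feeding into the final destabilisation of $S^m\sE$). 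A smaller but genuine error in the same step: $(L-\epsilon F)^d<0$ does not imply that $L-\epsilon F$ fails to be pseudoeffective (effective classes can have negative top self-intersection); the correct test is $L^{d-1}\cdot(L-\epsilon F)=-\epsilon\,L^{d-1}\cdot F<0$ against the nef class $L^{d-1}$.

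Your case $d=1$ is essentially sound and close in spirit to what the paper does via \cite[Lemma 6.22]{Dru17}: a degree-zero quotient line bundle of $g^*\sE$ would destabilise it, and stability of $g^*\sE$ follows from the holonomy hypothesis. The ramification worry you flag can be resolved cleanly without extending subsheaves across ramification points: $\sE$ is the flat unitary bundle of an irreducible representation $\rho$ of $\pi_1(C)$ (Narasimhan--Seshadri), its pullback under \emph{any} finite morphism $g$ is again flat with monodromy $\rho\circ g_*$, the image of $g_*$ has finite index in $\pi_1(C)$, and a finite-index subgroup of a subgroup that is Zariski-dense in the connected group $\mathrm{SL}$ or $\mathrm{Sp}$ is still Zariski-dense; hence $\rho\circ g_*$ is irreducible and $g^*\sE$ is stable. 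But repairing $d=1$ does not repair the proof, since the reduction to it is the part that fails.
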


A well-known result of Mumford \cite[Ex.10.6]{Har70} says that if $\sE$ is a locally free sheaf of rank two on a curve $C$
such that $c_1(\sE)=0$ and all the symmetric powers $S^l \sE$ are stable, then the tautological class
$\zeta$ has positive intersection with every curve $Z \subset \PP(\sE)$. Our proposition generalises this property to vector bundles of arbitrary rank.

\subsection{Minimal models with trivial canonical class}

The main motivation for our study of stable sheaves with numerically trivial determinant is to extend the Beauville-Bogomolov
decomposition \cite{Bea83} to singular spaces. Following \cite{GKP16}, let us explain the notions of 
singular Calabi-Yau and singular irreducible symplectic varieties.

\begin{definition} \label{definitionCYsymplectic}
Let $X$ be a normal projective variety of dimension $n \geq 2$ with at most canonical singularities
such that $\omega_X \simeq \sO_X$. 
\begin{itemize} 
\item $X$ is a Calabi-Yau variety if $H^0(Y,\Omega^{[q]}_Y) = 0$ for all integers $1 \leq q  \leq n-1$ and all covers $Y \to X$, \'etale in codimension $1$;
\item $X$ is irreducible symplectic if there exists a holomorphic $2$-form $\sigma \in H^0(X,\Omega^{[2]}_X) $ such that for 
 all covers $\gamma:Y \to X$, \'etale in codimension $1$, the exterior algebra of holomorphic reflexive forms is generated by the reflexive pull-back $\gamma^{[*]}(\sigma).$ 
\end{itemize} 
\end{definition} 

The Beauville-Bogomolov decomposition for a Ricci flat compact K\"ahler manifold $X$ states that after \'etale cover
$X$ is a product of a torus, Calabi-Yau and irreducible symplectic manifolds. In the last years there has been an intensive
effort \cite{GKP16c, Dru17, GGK17, DG17} to generalise this statement to minimal models.
Theorem \ref{theoremmain} allows to complete this challenge:

\begin{theorem} \label{theoremdecomp} 
Let $X$ be a normal projective variety with at most klt singularities such that $c_1(K_X)=0$.

Then there exists a projective variety $\tilde X$ with only canonical 
singularities, a quasi-\'etale map $f: \tilde X \to X$ and
a decomposition 
$$ 
\tilde X \simeq A \times \prod_{j \in J}  Y_j \times \prod_{k \in K}  Z_k
$$
into normal projective varieties with trivial canonical bundles, such that 
\begin{itemize} 
\item $A$ is an abelian variety; 
\item the $Y_j$ are (singular) Calabi-Yau varieties;
\item the $Z_k$ are (singular) irreducible symplectic varieties. 
\end{itemize} 
\end{theorem}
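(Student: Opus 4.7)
The plan is to combine Corollary \ref{cor:leaves} with the structural machinery developed in \cite{GKP16c}, \cite{GGK17}, and \cite{DG17}, which reduces the singular Beauville-Bogomolov decomposition precisely to the algebraic integrability statement now available. First, since $X$ is klt with $c_1(K_X)=0$, $K_X$ is known to be torsion, so the associated index-one cyclic cover is quasi-\'etale and yields $\omega_Y \simeq \sO_Y$; after passing further to the quasi-\'etale cover constructed in \cite{GKP16c}, I may assume that $Y$ has canonical singularities, is smooth in codimension two, and satisfies $\omega_Y \simeq \sO_Y$.

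Next, I would apply the holonomy decomposition of Greb-Guenancia-Kebekus \cite{GGK17}: after an additional quasi-\'etale cover, the reflexive tangent sheaf splits as
$$T_Y = \sO_Y^{\oplus q} \oplus \bigoplus_{i\in I}\sF_i,$$
where the $\sF_i$ are stable integrable subsheaves with $c_1(\sF_i)=0$ and connected algebraic holonomy. By the Beauville-Bogomolov classification, the holonomy of each $\sF_i$ is either $\mathrm{SL}(\sF_{i,x})$ or $\mathrm{Sp}(\sF_{i,x})$, which by \cite[Prop.5]{BK08} is equivalent to the stability of all reflexive symmetric powers $S^{[l]}\sF_i$, precisely the hypothesis of Corollary \ref{cor:leaves}.

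The decisive step, and the main obstacle previously unresolved, is the algebraic integrability of each $\sF_i$. When $c_2(\sF_i)=0$ this was already established in \cite{Dru17} and \cite{GGK17}. When $c_2(\sF_i)\neq 0$, Theorem \ref{theoremmain} forbids $\sF_i$ from being pseudoeffective, and then the foliation integrability results of Campana-P\v aun \cite{CP15} and Druel \cite{Dru17} produce algebraic leaves; this is the point at which the present paper contributes the missing ingredient.

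Finally, with all $\sF_i$ algebraically integrable, I would invoke the splitting theorem of Druel-Greb \cite{DG17}: after one further quasi-\'etale cover this yields the desired product decomposition
$$\tilde X \simeq A \times \prod_{j\in J} Y_j \times \prod_{k\in K} Z_k,$$
where $A$ is an abelian variety arising from the flat summand $\sO_Y^{\oplus q}$, the $Y_j$ correspond to the $\mathrm{SL}$-holonomy summands, and the $Z_k$ to the $\mathrm{Sp}$-holonomy summands. That the $Y_j$ are singular Calabi-Yau and the $Z_k$ are singular irreducible symplectic in the sense of Definition \ref{definitionCYsymplectic} then follows from the description of reflexive holomorphic forms on each factor via its holonomy representation.
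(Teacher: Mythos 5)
Your proposal is correct and follows essentially the same route as the paper: reduce via \cite[Thm.1.4]{Dru17} and the holonomy decomposition of \cite{GGK17} to showing that the (co)normal sheaves of the $\mathrm{SL}/\mathrm{Sp}$-holonomy summands of $T_{\tilde X}$ are not pseudoeffective, apply Theorem \ref{theoremmain} to rule out pseudoeffectivity, and conclude with Druel's integrability and splitting machinery. The one point deserving more care is the passage from the differential-geometric holonomy $SU/Sp$ supplied by \cite{GGK17} to the \emph{algebraic} holonomy being $\mathrm{SL}(F_x)$ or $\mathrm{Sp}(F_x)$ (equivalently, stability of all $S^{[l]}\sF_i$), which the paper derives from the Bochner principle together with \cite[Prop.12.14]{GGK17} and the irreducibility of the symmetric square representation, rather than directly from the Beauville--Bogomolov classification.
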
 

Although this significantly improves results from earlier papers, one should note that Theorem \ref{theoremdecomp}
is based to equal parts on a tripod consisting of Druel's algebraic integrability theorem \cite[Thm.1.4]{Dru17}, 
the holonomy decomposition of Greb-Guenancia-Kebekus \cite[Thm.B and Prop.D]{GGK17} and our Theorem \ref{theoremmain}.
For the proof we simply follow the arguments of \cite[Thm.1.6]{Dru17}.
 
Another consequence of Theorem \ref{theoremmain} is 

\begin{theorem} \label{theorempseff}
Let $X$ be a normal projective variety with at most canonical singularities. Suppose that $X$ is smooth in codimension two
and $c_1(K_X)=0$. Assume that the tangent sheaf  $T_X$ is strongly stable in the sense of \cite[Defn.7.2]{GKP16c}.

Then both the reflexive cotangent sheaf $\Omega_X^{[1]}$ and the tangent sheaf $T_X$ are not pseudoeffective.
In particular if $X$ is Calabi-Yau or irreducible symplectic manifold (in the sense of Definition \ref{definitionCYsymplectic}), 
then $\Omega_X^{[1]}$ and $T_X$ are not pseudoeffective (cf. Definition \ref{definitionpseff}).
\end{theorem}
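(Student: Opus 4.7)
The plan is to argue by contradiction via Theorem \ref{theoremmain}, applied to both $\sE = T_X$ and $\sE = \Omega_X^{[1]}$. Both are reflexive with vanishing slope, and strong stability of $T_X$ in the sense of \cite[Defn.7.2]{GKP16c} gives hypothesis (a) of Theorem \ref{theoremmain}, namely $H$-stability of every reflexive symmetric power $S^{[l]} T_X$ (and analogously for $\Omega_X^{[1]}$ by reflexive duality). Canonical singularities are klt, and a small $\Q$-factorialisation is an isomorphism in codimension one that preserves both strong stability and pseudoeffectivity, so I may assume $X$ is $\Q$-factorial. Hypothesis (b) on connectedness of the algebraic holonomy group can be arranged by passing to the finite quasi-\'etale Galois cover corresponding to its component group; this cover again preserves strong stability and pseudoeffectivity.

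Assume now that $T_X$ is pseudoeffective. Theorem \ref{theoremmain} then produces a finite cover $\nu:\tilde X \to X$, \'etale in codimension one, such that $\nu^{[*]} T_X$ is a numerically flat vector bundle with $c_1 = c_2 = 0$; since $\nu$ is \'etale in codimension one and $\tilde X$ is smooth in codimension two, this bundle agrees with $T_{\tilde X}$ on the smooth locus. I would then invoke Theorem \ref{theoremdecomp}: up to a further quasi-\'etale cover, $\tilde X$ decomposes as a product of an abelian variety and Calabi-Yau and irreducible symplectic factors. Strong stability of $T_X$ forces $T_{\tilde X}$ to remain stable after any quasi-\'etale pullback, so the decomposition cannot have more than one factor; and a Calabi-Yau or irreducible symplectic factor in dimension $\geq 2$ does not carry a numerically flat tangent sheaf (its reflexive differential forms, together with the $c_1=c_2=0$ characterisation of torus quotients, rule this out). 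Hence the cover is an abelian variety and $\nu^{[*]} T_X$ is the trivial bundle $\sO^{\oplus n}$, which is not stable for $n \geq 2$, contradicting strong stability of $T_X$.

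For $\Omega_X^{[1]}$ the same argument applies, since the reflexive dual of a numerically flat bundle is numerically flat; pseudoeffectivity of $\Omega_X^{[1]}$ thus yields a numerically flat $T_{\tilde X}$ and the same contradiction. The ``in particular'' clause follows because the holonomy decomposition of \cite[Thm.B, Prop.D]{GGK17} supplies strong stability of $T_X$ on Calabi-Yau and irreducible symplectic varieties in the sense of Definition \ref{definitionCYsymplectic}. The main obstacle I anticipate is the clean identification of the cover $\tilde X$ with an abelian variety: this requires the full strength of the singular Beauville-Bogomolov decomposition (Theorem \ref{theoremdecomp}) together with the Chern-class characterisation of torus quotients in the klt setting, and one must verify that the chain of quasi-\'etale covers (small $\Q$-factorialisation, component group of the holonomy, the cover from Theorem \ref{theoremmain}, and the BB splitting cover) composes to a single quasi-\'etale cover on which strong stability can be applied.
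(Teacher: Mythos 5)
Your overall strategy (contradiction via Theorem \ref{theoremmain}, then a torus-quotient contradiction with strong stability) is the right one, but there is a genuine gap at the very first step: strong stability of $T_X$ in the sense of \cite[Defn.7.2]{GKP16c} does \emph{not} formally imply hypothesis (a) of Theorem \ref{theoremmain}, i.e.\ the $H$-stability of all reflexive symmetric powers $S^{[l]}T_X$. Strong stability says that the reflexive pull-back of $T_X$ to every quasi-\'etale cover remains stable; by \cite[Prop.5]{BK08}, stability of all the $S^{[l]}$ is instead equivalent to the algebraic holonomy group being $\mbox{SL}$ or $\mbox{Sp}$. These conditions are genuinely different: a stable sheaf whose holonomy is, say, a special orthogonal group can stay stable on every cover while $S^{[2]}$ already decomposes (the invariant quadratic form splits off a trivial summand). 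So the implication you assert is the crux of the whole proof and cannot be waved through. The paper closes exactly this gap using the differential-geometric input of \cite{GGK17}: first pass to a quasi-\'etale cover on which $X$ becomes Calabi--Yau or irreducible symplectic (\cite[Thm.E]{GGK17}), then invoke \cite[Prop.12.14, Thm.12.15]{GGK17}, which use the singular Ricci-flat metric and the Bochner principle to show that the holonomy is $\mbox{SU}(n)$ or $\mbox{Sp}(n/2)$, whence all $S^{[l]}T_X$ (and, by duality, all $S^{[l]}\Omega_X^{[1]}$) are $H$-stable. Without this step your argument does not get off the ground; your citation of \cite{GGK17} only appears later and in the reverse logical direction (to get strong stability on CY/IHS varieties for the ``in particular'' clause).

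The remainder of your argument is correct in substance but more roundabout than necessary: you route the contradiction through the full decomposition Theorem \ref{theoremdecomp} in order to identify the cover with an abelian variety. The paper instead uses only the first conclusion of Theorem \ref{theoremmain}, namely $c_2(X)\cdot H^{n-2}=0$, and then quotes \cite[Thm.1.17]{GKP16b} to conclude that $X$ is a quasi-\'etale quotient of a torus, which already contradicts strong stability of $T_X$. No appeal to the Beauville--Bogomolov splitting is needed, and the technical worries you raise about composing the various covers and about the $\Q$-factorialisation (which, note, need not preserve smoothness in codimension two) then largely disappear.
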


This result was proven for  surfaces in \cite[Thm.IV.4.15]{Nak04}  \cite[Thm.7.8]{BDPP13} and for  threefolds in \cite[Cor.6.5]{Dru17}.

\subsection{Almost nef sheaves}

While Theorem \ref{theoremmain} is sufficiently strong for the proof of the decomposition theorem, it is in general 
not easy to control the stability of all the symmetric powers. We therefore also consider a weaker positivity notion:

\begin{definition} 
Let $X$ be a normal projective variety, and let $\sE$ be a reflexive sheaf on $X$.
We say that $\sE$ is almost nef, if there exist at most countably many
proper subvarieties $S_j \subsetneq X$ such that the following holds: let $C \subset X$ be a curve such that
$\sE \vert_C = \sE \otimes \sO_C$ is not nef, then $C$ is contained in $\cup_{j \in J} S_j$. 
\end{definition}

Using completely different techniques we prove the following

\begin{theorem} \label{theorem:almostnef} 
Let $X$ be a normal $\mathbb Q$-factorial projective variety with at most klt singularities. 
Suppose that $X$ is smooth in codimension $2$. 
Let $\sE$ be a reflexive sheaf on $X$ such that $c_1(\sE)=0$.

If $\sE$ is almost nef, then we have $c_2(\sE)=0$. Moreover
there exists a quasi-\'etale cover $\gamma: \tilde X \to X$ such that  $\gamma^{[*]}(\sE)$ is locally free and numerically flat. 
\end{theorem}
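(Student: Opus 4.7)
The proof uses techniques different from those of Theorem~\ref{theoremmain}: instead of stability of all symmetric powers, it exploits the stronger geometric input of almost nefness --- nefness on a Zariski-dense set of curves. The strategy is to reduce to a surface, establish the Bogomolov equality there, and conclude via the klt Bando--Siu machinery.

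\textbf{Step 1 ($H$-semistability and reduction to a surface).} Fix an ample $H$. For $m\gg 0$ and general $D_1,\ldots,D_{n-1}\in|mH|$, the complete intersection curve $C=D_1\cap\cdots\cap D_{n-1}$ is smooth, lies in the locally free locus of $\sE$ (using that $X$ is smooth in codimension $2$ and $\sE$ is reflexive), and avoids the countable union $\bigcup_j S_j$. Hence $\sE|_C$ is a nef vector bundle of degree $0$, in particular semistable; by Mehta--Ramanathan \cite{MR84}, $\sE$ itself is $H$-semistable on $X$. Cutting with $n-2$ sufficiently general divisors $E_i\in|mH|$ similarly yields a smooth surface $S\subset X$ on which $\sE|_S$ is $(H|_S)$-semistable, has $c_1=0$, and is almost nef with bad locus the proper countable union $\bigcup_j(S_j\cap S)\subsetneq S$.

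\textbf{Step 2 (Vanishing of $c_2$).} Let $\pi\colon\PP(\sE|_S)\to S$ denote the projective bundle with tautological class $\zeta$. Almost nefness of $\sE|_S$ is equivalent to saying that every irreducible component of the restricted base locus $B_-(\zeta)$ is contained in $\pi^{-1}(\bigcup_j(S_j\cap S))$, since a curve $C'$ with $\zeta\cdot C'<0$ must project to a curve of $S$ on which $\sE|_S$ is not nef. In particular $\zeta$ is pseudoeffective and the negative part of its Boucksom divisorial Zariski decomposition $\zeta=P+N$ is purely vertical (supported on $\pi^{-1}(C_i)$ for curves $C_i\subset S$). Bogomolov's inequality on the $(H|_S)$-semistable $\sE|_S$ gives $c_2(\sE|_S)\geq 0$. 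For the reverse inequality, one computes on the smooth threefold $\PP(\sE|_S)$: since $\zeta^{r+1}=-c_2(\sE|_S)$ and the verticality of $N$ forces $N^k=0$ for $k\geq 3$, the expansion
$$
\zeta^{r+1}=P^{r+1}+(r+1)P^r\cdot N+\binom{r+1}{2}P^{r-1}\cdot N^2
$$
is driven by the movable class $P$ and controllable cross-terms, and a Hodge-index argument (together with BDPP duality \cite{BDPP13} on $\PP(\sE|_S)$, which would otherwise produce a covering family of $\zeta$-negative curves projecting to a covering family of non-nef curves on $S$) shows $\zeta^{r+1}\geq 0$, hence $c_2(\sE|_S)=0$. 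Varying $S$ yields $c_2(\sE)\cdot H^{n-2}=0$ for every ample $H$, so $c_2(\sE)=0$.

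\textbf{Step 3 (Flatness after quasi-\'etale cover and main obstacle).} With $c_1(\sE)=0$, vanishing discriminant, and $H$-semistability, the klt-reflexive version of Bando--Siu (as used in \cite{GKP16c, GGK17, DG17}) endows $\sE$ with a Hermitian--Einstein metric whose curvature must vanish, and the associated unitary monodromy corresponds to a quasi-\'etale cover $\gamma\colon\tilde X\to X$ on which $\gamma^{[*]}\sE$ is locally free and numerically flat. The delicate heart of the proof lies in Step~2: extracting Bogomolov equality in arbitrary rank from the purely vertical structure of the negative part of $\zeta$ provided by almost nefness. Druel's analysis \cite[Thm.~6.1]{Dru17} handles small rank via case analysis on $B_-(\zeta)$, and Theorem~\ref{theoremmain} treats higher rank using symmetric-power stability; neither tool is available here, so the intersection-theoretic argument on $\PP(\sE|_S)$ must be driven solely by the verticality of $N$ and the movability of $P$, with careful control of the cross-term $P^{r-1}\cdot N^2$ which can a priori carry a negative contribution.
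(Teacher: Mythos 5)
Your Steps 1 and 3 are sound and match the paper's strategy (reduce to a very general smooth surface $S\subset X$, then invoke \cite[Thm.1.20]{GKP16b} for the quasi-\'etale cover), but Step 2 --- which you yourself identify as the heart of the matter --- has a genuine gap on two counts. First, the claimed equivalence between almost nefness of $\sE|_S$ and the inclusion $B_-(\zeta)\subset\pi^{-1}(\bigcup_i C_i)$ is not correct: almost nefness only says that every curve $C'$ with $\zeta\cdot C'<0$ lies over some $C_i$; it does not prevent a divisorial component of $B_-(\zeta)$, hence of the negative part $N$ of the divisorial Zariski decomposition, from dominating $S$. Excluding dominating components of $B_-(\zeta)$ is exactly what costs the stability of all symmetric powers in Theorem \ref{theoremmain}, and no substitute for that input is offered here, so the verticality of $N$ is unjustified. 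Second, even granting $N=\pi^*\bigl(\sum_i a_iC_i\bigr)$, the cross-term $P^{r-1}\cdot N^2=\bigl(\sum_i a_iC_i\bigr)^2$ can perfectly well be negative (the $C_i$ may have negative self-intersection), and the appeals to a ``Hodge-index argument'' and to ``BDPP duality'' are not arguments; you concede as much in your closing paragraph. Hence $\zeta^{r+1}\geq 0$, i.e.\ $c_2(\sE|_S)\leq 0$, is not established.

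The paper's actual proof avoids the Zariski decomposition entirely and replaces it by an explicit construction. One chooses $r-1$ very general sections of $\sE^*\otimes\sO_S(mH)$ (Lemma \ref{lemmaglobalsections}) whose restriction to each $C_i$ defines a subbundle $\sO_{C_i}^{\oplus r-1}\hookrightarrow(\sE^*\otimes\sO_S(mH))|_{C_i}$; dualising gives a generically surjective map $\sE\to\sO_S(mH)^{\oplus r-1}$ whose image $\sH$ yields a prime divisor $D=\PP(\sH)\in|\zeta+m(r-1)\pi^*H|$ meeting each $\pi^{-1}(C_i)$ in $\PP(\sO_{C_i}(mH)^{\oplus r-1})$, where $\zeta$ is ample (Lemma \ref{lemmaeliminatecurves}). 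Since every $\zeta$-negative curve lies over some $C_i$, the restriction $\zeta|_D$ is nef, so $\zeta^r\cdot D=(\zeta|_D)^r\geq 0$; the Leray--Hirsch relation turns this into $-c_2(\sE|_S)\geq 0$, and Bogomolov--Gieseker together with Simpson's theorem (Proposition \ref{propositionnumericallyflat}) give $c_2(\sE|_S)=0$ and numerical flatness. To rescue your route you would need to produce exactly this kind of explicit effective class on which $\zeta$ is nef; the decomposition $\zeta=P+N$ by itself does not deliver it.
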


Based on analytic techniques a slightly weaker statement was shown in \cite[Prop.2.11]{CH17}.
In particular we obtain a positive answer to a question asked in \cite{DPS01}, 
without any assumption on the stability:

\begin{theorem} \label{corollaryalmostnef}
Let $X$ be a normal $\mathbb Q$-factorial projective variety with at most klt singularities. 
Suppose that $X$ is smooth in codimension $2$.  
Suppose that $c_1(X)=0$ and $X$ is not dominated by an abelian variety.

Then $T_X$ is not almost nef, i.e. there exists a dominating family of irreducible curves $C_t \subset X_{\nons}$
such that $T_X|_{C_t}$ is not nef.
\end{theorem}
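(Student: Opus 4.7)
The plan is to argue by contradiction, combining the three main results of the paper: Theorem \ref{theorem:almostnef}, the decomposition Theorem \ref{theoremdecomp}, and the non-pseudoeffectivity Theorem \ref{theorempseff}.

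First, I would suppose that $T_X$ is almost nef. Since $c_1(K_X)=0$ gives $c_1(T_X)=-c_1(K_X)=0$, Theorem \ref{theorem:almostnef} applies with $\sE=T_X$ and produces a quasi-\'etale cover $\gamma:\tilde X\to X$ on which $\gamma^{[*]}T_X$ is locally free and numerically flat. Because $\gamma$ is \'etale in codimension one and both $T_X$ and $T_{\tilde X}$ are reflexive of the same rank, the reflexive pullback is canonically identified with $T_{\tilde X}$; in particular $T_{\tilde X}$ is a numerically flat vector bundle. The cover $\tilde X$ remains klt and satisfies $c_1(K_{\tilde X})=0$.

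Next, I would apply the decomposition Theorem \ref{theoremdecomp} to $\tilde X$. This produces a further quasi-\'etale cover $\rho:\hat X\to\tilde X$ together with a splitting
$$
\hat X\simeq A\times\prod_{j\in J}Y_j\times\prod_{k\in K}Z_k,
$$
with $A$ abelian, each $Y_j$ a singular Calabi-Yau, and each $Z_k$ a singular irreducible symplectic variety. Reflexive pullback along the quasi-\'etale $\rho$ preserves numerical flatness, so $T_{\hat X}\simeq\rho^{[*]}T_{\tilde X}$ is numerically flat, and the product decomposition splits it as the direct sum of the reflexive tangent sheaves of the factors. Each factor's tangent sheaf is therefore numerically flat, and in particular pseudoeffective.

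The last step is to rule out positive-dimensional Calabi-Yau and irreducible symplectic factors. For such a factor, the holonomy characterization of Greb-Guenancia-Kebekus \cite{GGK17} implies that its reflexive tangent sheaf is strongly stable in the sense of \cite{GKP16c}, whence Theorem \ref{theorempseff} forces this tangent sheaf to be \emph{not} pseudoeffective, a contradiction. Hence $J$ and $K$ are empty, $\hat X=A$ is abelian, and the composition $A=\hat X\to\tilde X\to X$ is a finite surjective morphism from an abelian variety onto $X$, contradicting the standing hypothesis.

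The main obstacle is precisely this last step: one needs the reflexive tangent sheaves of the singular Calabi-Yau and irreducible symplectic factors appearing in the decomposition to satisfy the strong stability assumption of Theorem \ref{theorempseff}. This is not automatic in the klt setting, but should be extracted from the irreducible holonomy of these factors established in \cite{GGK17}. Once that is pinned down, the contradiction is immediate and no further analytic input is required.
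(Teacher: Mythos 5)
Your argument is correct in substance, but it takes a genuinely different and considerably heavier route than the paper. The paper's proof is three lines: after Theorem \ref{theorem:almostnef} produces the quasi-\'etale cover $\gamma:\tilde X\to X$ with $T_{\tilde X}=\gamma^{[*]}(T_X)$ locally free and numerically flat, it invokes the solution of the Lipman--Zariski problem for klt spaces \cite[Thm.~6.1]{GKKP11} to conclude that $\tilde X$ is \emph{smooth} with $c_2(\tilde X)=0$, and then Yau's theorem to see that $\tilde X$ is an \'etale quotient of a torus, contradicting the hypothesis. You instead feed $\tilde X$ into the full decomposition Theorem \ref{theoremdecomp} and kill the Calabi--Yau and irreducible symplectic factors with Theorem \ref{theorempseff}. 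This does work: the factors are smooth in codimension two (quasi-\'etale covers, and direct factors of a product, of a variety smooth in codimension two are again so), their tangent sheaves are locally free numerically flat direct summands of the numerically flat $T_{\hat X}$, hence pseudoeffective in the sense of Definition \ref{definitionpseff}; and the strong stability that you flag as the ``main obstacle'' is not actually an open point --- it is exactly what the ``in particular'' clause of Theorem \ref{theorempseff} already encapsulates, its proof deriving the stability of all $S^{[l]}T_X$ for Calabi--Yau and irreducible symplectic varieties from \cite[Prop.~12.14, Thm.~12.15]{GGK17}. What your route costs is economy: Theorem \ref{theoremdecomp} itself rests on Druel's integrability theorem, the holonomy decomposition of \cite{GGK17} and Theorem \ref{theoremmain}, whereas the paper's argument needs only \cite{GKKP11} and Yau. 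What it buys is that you never invoke the Lipman--Zariski statement or the differential-geometric characterisation of torus quotients directly --- although noticing that $T_{\tilde X}$ locally free forces $\tilde X$ smooth would have simplified several of your codimension-two verifications, since a quasi-\'etale cover of a smooth variety is \'etale.
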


This statement was shown for smooth threefolds in \cite[Thm.7.7]{BDPP13}.
The assumption of Theorem \ref{corollaryalmostnef} is too weak to use the techniques from Theorem \ref{theoremmain}.
Nevertheless we expect that the stronger conclusion of Theorem \ref{theorempseff} also holds for minimal models with
trivial canonical class such that the decomposition does not contain an abelian factor.

{\bf Acknowledgements.} We thank S. Cantat and P. Graf for some very useful references.
This work was partially supported by the Agence Nationale de la Recherche grant project Foliage\footnote{ANR-16-CE40-0008} and by the DFG project "Zur Positivit\"at in der komplexen
Geometrie". 

\section{Notation, basic facts and proof of Proposition \ref{propositioncurve} }
\label{sectionnotation}

We work over the complex numbers, for general definitions we refer to \cite{Har77}. 
We use the terminology of \cite{Deb01} and \cite{KM98}  for birational geometry and notions from the minimal model program and \cite{Laz04a} for notions of positivity.
Manifolds and varieties will always be supposed to be irreducible. Given a normal 
variety $X$ we denote by $T_X:= \Omega_X^{*}$ its tangent sheaf. 
The sheaf of reflexive differentials of degree $q \in \{ 1, \ldots, \dim X \}$ is given by 
$$
\Omega^{[q]}_X := (\bigwedge^q\Omega^1_X)^{**}.
$$ 
A finite map $\holom{\gamma}{X'}{X}$ between normal varieties is quasi-\'etale if its ramification divisor
is empty (or equivalently, by purity of branch, $\gamma$ is \'etale over the smooth locus of $X$).

Given a torsion-free sheaf $\sF$ on a normal variety $X$, we denote by $S^{[m]} \sF:= (\mbox{Sym}^m \sF)^{**}$
the $m$-th reflexive symmetric power. Given a morphism $\holom{\nu}{Y}{X}$, we denote by
$\gamma^{[*]}(\sF):=(\gamma^*(\sF))^{**}$ the reflexive pull-back.
The projectivization $\PP(\sF) $ is defined by 
$$ \PP(\sF) =  \mbox{Proj}(\mbox{Sym}^\bullet \sF);$$  
with projection $ p:  \PP(\sF) \to X$.
By $\sO_{\PP(\sF)}(1) = \sO(1) $ we denote the tautological line bundle on $\PP(\sF).$ Hence
$$ H^q(\PP(\sF), \sO(1) \otimes p^*(\sG)) = H^q(X,S^m(\sF) \otimes \sG)) $$
for all $q \geq 0$ and all locally free sheaves $\sG$ on $X$.

In order to simplify the notations we will denote, for a normal subvariety $Y \subset X$ such that
$\sE$ is locally free near $Y$, the restriction $\sE|_Y$
by $\sE_Y$ and by $\zeta_Y$ the restriction of the tautological class to $\PP(\sE_Y)$.

If $\sE$ is a locally free sheaf on a normal projective variety, then $\sE$ is {\it pseudoeffective } if the line bundle
$\sO_{\PP(\sE)}(1)$ is pseudoeffective. This is equivalent to saying that, fixing any ample line bundle $H$ on $X$,  
for any real number $c > 0$ there exist positive integers $i$ and $j$ with $i > cj $ 
such that
$$
H^0(X, S^i \sE \otimes H^{\otimes j}) \ne  0.
$$
This vanishing will be crucial in our argumentation.

If $\sE$ is merely reflexive, then 
in general the projectivisation 
$\PP(\sE) $ is a very singular space and the push-forward of multiples
of the tautological class are not isomorphic to the reflexive symmetric powers $S^{[m]} \sE$. Because of this subtlety,
we choose to define pseudoeffective reflexive sheaves using the cohomological characterization above:

\begin{definition} \label{definitionpseff}
Let $X$ be a normal variety, and let $\sE$ be a reflexive sheaf on $X$. We say that $\sE$ is pseudoeffective
if for all $c>0$ there exist a number $j \in \N$ and $i \in \N$ such that $i>cj$ and
$$
H^0(X, S^{[i]} \sE \otimes H^{\otimes j}) \neq 0.
$$ 
\end{definition}

In order relate this to $\PP(\sE) $ and the tautological sheaf $\sO_{\PP(\sE)}(1)$, 
we use a construction due to Nakayama \cite[V,3.23]{Nak04}:

\begin{definition} \label{definitiontautological}
Let $X$ be a normal variety, and let $\sE$ be a reflexive sheaf on $X$. 
\begin{itemize} 
\item 
Denote by $$ \nu: \PP'(\sE) \to \PP(\sE)$$ 
the normalization of the unique component of $\PP(\sE)$ 
that dominates $X$. 
\item Set  $\sO_{\PP'(\sE)}(1) := \nu^*(\sO_{\PP(\sF)}(1))$. 
\item 
Let $X_0 \subset X$ be the locus where $X$ is smooth and  $\sE$ is locally free, and let 
$$\holom{r}{\PP_0(\sE)}{\PP'(\sE)}$$ 
be a birational morphism from a manifold $\PP_0(\sE)$ such
that the complement of $\fibre{(p \circ \nu \circ r)}{X_0} \subset \PP_0(\sE) $ is a divisor $D$.
\item Set $\pi := p \circ \nu \circ r$ and $\sO_{\PP_0(\sE)}(1) := r^*(\sO_{\PP'(\sE)}(1)).$ 
\item  By \cite[III.5.10.3]{Nak04} there exists an effective
divisor $\Lambda$ supported on $D$ such that
$$
\pi_* (\sO_{\PP_0(\sE)}(1) \otimes \sO(\Lambda)) \simeq S^{[m]} \sE \qquad \forall \ m \in \N.
$$
\item  We call $\zeta := c_1(\sO_{\PP_0(\sE)}(1) \otimes \sO(\Lambda))  \in N^1(\PP_0(\sE))$
 a tautological class of $\sE$. 
 
\end{itemize}
\end{definition}

Using the defining property of a tautological class, the arguments of \cite[Lemma 2.7]{Dru17} apply literally to show the following: 

\begin{lemma} \label{lemmapseff}
Let $X$ be a normal projective variety, and let $H$ be an ample line bundle on $X$.
Let $\sE$ be a reflexive sheaf on $X$, and let $\zeta $ be a tautological class on $\holom{\pi}{\PP_0(\sE)}{X}$.
Then $\zeta $ is pseudoeffective if and only if $\sE $ is pseudoeffective. 
\end{lemma}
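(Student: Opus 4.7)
The plan is to reformulate both pseudoeffectivity notions as asymptotic section conditions on $Y := \PP_0(\sE)$ and identify them via the projection formula and the relative geometry of $\pi$. First, by applying the projection formula to the defining identity
\[
\pi_*\bigl(\sO_{\PP_0(\sE)}(m)\otimes\sO(m\Lambda)\bigr)\simeq S^{[m]}\sE
\]
of Definition \ref{definitiontautological}, I obtain for all $i,j\in\N$ the identification
\[
H^0\bigl(Y,\,\sO_{\PP_0(\sE)}(i)\otimes\sO(i\Lambda)\otimes\pi^*H^{\otimes j}\bigr)\simeq H^0\bigl(X,\,S^{[i]}\sE\otimes H^{\otimes j}\bigr).
\]
Reading $i\zeta$ as the line bundle $\sO_{\PP_0(\sE)}(i)\otimes\sO(i\Lambda)$ and invoking Definition \ref{definitionpseff}, the pseudoeffectivity of $\sE$ is therefore equivalent to condition $(\star)$: for every $c>0$ there exist $i,j\in\N$ with $i>cj$ and $H^0(Y,\,i\zeta+j\pi^*H)\neq 0$. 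It remains to show $(\star)$ is equivalent to the pseudoeffectivity of the class $\zeta$ on $Y$.

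The direction $(\star)\Rightarrow\zeta$ pseff is immediate: a nonzero section of $i\zeta+j\pi^*H$ with $i>cj$ exhibits $\zeta+(j/i)\pi^*H$ as $\Q$-effective, hence pseff, with $j/i<1/c$. Letting $c\to\infty$ displays $\zeta$ as a limit of pseff classes, so $\zeta$ is pseff by closedness of the pseff cone. For the converse, I would first establish that $\zeta+N\pi^*H$ is big on $Y$ for some $N\gg 0$: the section identification gives
\[
h^0\bigl(Y,\,m(\zeta+N\pi^*H)\bigr)=h^0\bigl(X,\,S^{[m]}\sE\otimes H^{\otimes mN}\bigr),
\]
which grows like $m^{\dim Y}$ by asymptotic Riemann-Roch together with uniform Serre vanishing, valid once $N$ is large enough (exploiting the relative ampleness of $\sO_{\PP'(\sE)}(1)$ over $X$, transported through the birational modification $r\colon\PP_0(\sE)\to\PP'(\sE)$). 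Granting bigness, for every $0<\epsilon<N$ the decomposition
\[
\zeta+\epsilon\,\pi^*H\;=\;\Bigl(1-\tfrac{\epsilon}{N}\Bigr)\zeta+\tfrac{\epsilon}{N}(\zeta+N\pi^*H)
\]
writes $\zeta+\epsilon\pi^*H$ as the sum of a pseff class and a positive multiple of a big class, hence as a big class. Thus $h^0\bigl(Y,\,m(\zeta+\epsilon\pi^*H)\bigr)>0$ for $m$ sufficiently divisible, producing sections $H^0(Y,\,i\zeta+j\pi^*H)\neq 0$ with $i/j=1/\epsilon$ arbitrarily large, which establishes $(\star)$.

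The main obstacle is the bigness of $\zeta+N\pi^*H$ for some $N$: since $\pi^*H$ is only nef (and not big on $Y$, because $\dim Y>\dim X$), this cannot be deduced from formal cone arguments alone. The required geometric input is the relative ampleness of $\sO_{\PP'(\sE)}(1)$ over $X$, pushed through $r$ to $\PP_0(\sE)$, combined with a uniform Serre vanishing for the family of twists $S^{[m]}\sE\otimes H^{\otimes mN}$. This is exactly the content of the argument in \cite[Lemma 2.7]{Dru17}, which applies verbatim to our setting thanks to the defining property of the tautological class recorded in Definition \ref{definitiontautological}.
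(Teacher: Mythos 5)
Your argument is correct and is essentially the paper's own proof: the paper simply states that the arguments of \cite[Lemma 2.7]{Dru17} apply verbatim via the defining property $\pi_*(m\zeta)\simeq S^{[m]}\sE$, and what you have written out (the projection-formula identification of sections, the limit argument for one direction, and the bigness of $\zeta+N\pi^*H$ plus the convex decomposition for the other) is precisely that argument made explicit. No discrepancies to report.
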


\begin{remarks}

- Lemma \ref{lemmapseff} shows in particular that the existence of a pseudoeffective class $\zeta$  does not depend 
on the choice of the birational model $\PP_0(\sE) \rightarrow \PP'(\sE) \rightarrow X$,
nor on the effective divisor $\Lambda$. 

- If the tautological class $\sO_{\PP'(\sE)}(1)$ on the normalisation $\PP'(\sE)$ is pseudoeffective,
any tautological divisor $\zeta$ is pseudoeffective. 

\end{remarks}

\begin{definition} \cite{DPS94}
Let $X$ be a normal projective variety, and let $\sE$ be a locally free sheaf on $X$. We say that $\sE$ is numerically flat
if both $\sE$ and $\sE^*$ are nef. This is equivalent to assuming that both $\sE$ and $\det \sE^*$ are nef. 
\end{definition}  

\begin{remarks} \label{remarksflat}

- By \cite{DPS94} we know that if $\sE$ is numerically flat, then all the Chern classes vanish and $\sE$ is semi-stable for any ample polarization. 

- Let $\sE$ be a locally free sheaf on $X$ that is flat, i.e. $\sE$ is given by a linear representation of $\pi_1(X)$.
If $c_1(\det \sE)=0$, then $\sE$ is nef (e.g. \cite[Thm.1.1]{JR13}), hence numerically flat. 

- Strictly speaking, both statements are established so far only when $X$ is smooth. However, it is not difficult to derive the assertions in the normal case by
passing to a desingularisation; see \cite{GKP16} for the techniques. 
\end{remarks}

\begin{definition} Let $X$ be a normal projective variety of dimension $n$ that is smooth in codimension two,
and let $H$ be an ample divisor on $X.$
Let $\sE$ be a reflexive free coherent sheaf on $X$. 

For $m \gg 0$ let $D_j$ be general divisors in $\vert mH \vert $ and set 
$$ 
C = D_1 \cap \ldots \cap H_{n-1}; \  S =  D_1 \cap \ldots \cap H_{n-2}.
$$ 
We define 
$$ 
c_1(\sE) \cdot H^{n-1} := \frac {1}{m^{n-1}}  c_1(\sE \vert C).
$$ 
We furthermore set 
$$ 
c_2(\sE) \cdot H^{n-2} :=  \frac {1}{m^{n-2}}  c_2(\sE \vert S),
$$ 
observing that $\sE_S$ is locally free. 
\end{definition} 

The definitions above do not depend on the choice of $m$ and the divisors $D_j.$ If $X$ is $\mathbb Q-$factorial, then $\det \sE$ is $\mathbb Q-$Cartier and $c_1(\sE)$ itself is defined by $\frac{1}{m} c_1(\det \sE^{[m]})$ for sufficiently divisible $m$.
We refer to \cite[Sect.2.8]{Dru17} for a systematic approach.
  
\subsection{Stability and holonomy groups}

In this paper we will use the standard notion of slope-(semi-)stability of torsion-free sheaves $\sF$ with respect to an ample line bundle $H$ as defined in \cite[Part I, Lect.III]{MP97}, and denote by $\mu_H(\sF)$ the slope of $\sF$ with respect to $H$.
Miyaoka has shown the following useful basic fact:

\begin{proposition}  \label{prop:triv}  \cite{Miy87}, \cite[Prop.6.4.11]{Laz04b}
Any semistable vector bundle $\sE$ over a smooth curve with $c_1(\sE)=0$ is nef.
\end{proposition}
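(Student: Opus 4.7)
The idea is to translate nefness into a statement about line-bundle quotients of pullbacks of $\sE$ to finite covers of $C$, and then extract a contradiction from semistability.

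First, I would recall that $\sE$ is nef on $C$ precisely when the tautological class $\zeta = c_1(\sO_{\PP(\sE)}(1))$ is nef on $\PP(\sE)$, and argue by contradiction: if $\zeta$ is not nef, there exists an irreducible curve $Z \subset \PP(\sE)$ with $\zeta \cdot Z < 0$. Since $\zeta$ restricts to the hyperplane class $\sO_{\PP^{r-1}}(1)$ on every fiber of $p: \PP(\sE) \to C$, the curve $Z$ cannot sit in a single fiber, so $p|_Z : Z \to C$ is finite and surjective.

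Next, let $\nu : \tilde Z \to Z$ be the normalization and set $f := p \circ \nu : \tilde Z \to C$, a finite morphism of smooth projective curves. The lift $\tilde Z \to \PP(\sE)$ over $C$ corresponds via the universal property of $\PP(\sE)$ to a line-bundle quotient $f^* \sE \twoheadrightarrow L$, whose degree equals
$$
\deg L = \nu^*\zeta \cdot [\tilde Z] = \deg(\nu) \cdot (\zeta \cdot Z) < 0.
$$

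Finally, I would invoke the classical fact (valid in characteristic zero) that the pullback of a semistable sheaf along a finite morphism of smooth curves is again semistable of slope $\deg(f)\cdot \mu(\sE)$. Applied here, $f^*\sE$ is semistable of slope $0$, so every line-bundle quotient has non-negative degree, contradicting $\deg L < 0$.

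The only delicate point is the preservation of semistability under finite pullback: one shows that if $L$ were a destabilizing quotient of $f^*\sE$, then averaging its kernel over the Galois closure of $f$ produces a destabilizing subsheaf of $\sE$ itself (this is where characteristic zero is essential). Everything else is formal from the universal property of $\PP(\sE)$ and the observation that $\zeta$ is fiberwise ample.
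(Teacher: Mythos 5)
Your argument is correct, and it is essentially the standard proof of this fact as given in the cited reference \cite[Prop.~6.4.11]{Laz04b}: reduce nefness to non-negativity of degrees of line-bundle quotients of $f^*\sE$ for finite covers $f$ of the curve, then use that semistability (with slope scaled by $\deg f$) is preserved under such pullbacks in characteristic zero via the Galois-closure/descent argument. The paper itself states this proposition without proof, relying on the references, so there is nothing to compare beyond noting that your route coincides with the textbook one.
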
 

The behaviour of stability under restrictions will play an important role.

\begin{definition} \label{definitionmrgeneral}
Let $X$ be a normal projective variety, and let $H$ be an ample line bundle on $X$.
Let $\sF$ be a torsion-free sheaf on $X$ that is $H$-semistable.  
A MR-general curve $C \subset X$ is a complete intersection
$D_1 \cap \ldots \cap D_{n-1}$ where $D_j \in | m H |$ with $m \in \N$ such that the restriction $\sF_C$ is semistable.
\end{definition}

\begin{remark} \label{remarkmrgeneral}
The abbreviation MR stands of course for Mehta-Ramanathan, alluding to the well-known fact \cite{MR82, Fle84}
that for $m \gg 0$ and a general $D_j \in | m H |$, the restriction $\sF_C$ is indeed semistable.
\end{remark}

For lack of reference we include the following singular version of the restriction theorem of Mehta-Ramanathan 
\cite[Thm.4.3]{MR84}.

\begin{lemma} \label{lemmaMR}
Let $X$ be a normal $\Q$-factorial projective variety of dimension $n$, and $H$ an ample line bundle on $X$. Let $\sE$ be a torsion-free sheaf on $X$ 
that is $H$-stable. Then there exists a $m_0 \in \N$ such that for all $m \geq m_0$ 
and $D_1, \ldots, D_{n-1}$ general elements in $|m H|$, the restriction $\sE_C$ of $\sE$ to
$C := D_1 \cap \ldots \cap D_{n-1}$ is stable.
\end{lemma}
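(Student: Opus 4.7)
The plan is to adapt the classical Mehta-Ramanathan argument \cite{MR84} to the normal $\Q$-factorial setting, exploiting the fact that both the singular locus of $X$ and the non-locally-free locus of $\sE$ have codimension at least two in $X$.

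First I would let $U \subset X$ be the open subset where $X$ is smooth and $\sE|_U$ is locally free, so that $\codim_X(X \setminus U) \geq 2$. For $m \gg 0$, iterated Bertini-type theorems in the normal setting together with this codimension estimate guarantee that, for general $D_1,\ldots,D_{n-1} \in |mH|$, every partial intersection is normal and irreducible, and the final complete intersection $C := D_1 \cap \cdots \cap D_{n-1}$ is a smooth irreducible curve entirely contained in $U$. Hence $\sE_C$ is a vector bundle on $C$, with well-defined slope $\mu_H(\sE_C) = m^{n-1}\mu_H(\sE)$.

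Next I would invoke the semistable version of the restriction theorem (which is available in this normal setting by \cite{MR82, Fle84}; cf.\ Remark \ref{remarkmrgeneral}) to conclude that, for $m \gg 0$ and a general tuple, $\sE_C$ is $\mu_H$-semistable on $C$. Suppose for contradiction that $\sE_C$ is not stable for the general tuple. Consider the open parameter space $T \subset |mH|^{n-1}$ of good tuples, with universal family $q \colon \mathcal{C} \to T$ and evaluation map $q_X \colon \mathcal{C} \to X$. Fixing the Hilbert polynomial $P$ of a maximal destabilizing subsheaf on a generic fibre, the relative Quot scheme $\mathrm{Quot}(q_X^* \sE / T, P)$ is projective over $T$; the classical monodromy and irreducibility argument of Mehta-Ramanathan then produces, after shrinking $T$, a coherent subsheaf $\sF_{\mathcal{C}} \subset q_X^* \sE$ whose restriction to each fibre $C_t$ is the maximal semistable subsheaf of $\sE_{C_t}$, of slope equal to $\mu_H(\sE_{C_t})$.

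The main obstacle, and the technical heart of the argument, is to descend $\sF_{\mathcal{C}}$ to a saturated destabilizing subsheaf $\sF \subset \sE$ on $X$, thereby contradicting the hypothesis of $H$-stability. As $t$ varies over $T$ the curves $C_t$ sweep out a dense open subset of $U$, and the fibrewise constructions are compatible under $q_X$; this yields a coherent subsheaf of $\sE$ on a big open subset of $X$, whose reflexive hull $\sF \subset \sE$ satisfies $\mu_H(\sF) \geq \mu_H(\sE)$. All slope manipulations are legitimate because $X$ is $\Q$-factorial, so that first Chern classes and their intersections with $H^{n-1}$ are well defined after passing to reflexive determinants. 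Since the entire construction takes place inside $U$ or on smooth curves contained in $U$, the classical proof passes through essentially unchanged.
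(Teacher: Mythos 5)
Your route is genuinely different from the paper's, and as written it has a gap at exactly the step you yourself call the technical heart. The paper does not redo the Mehta--Ramanathan machinery at all: it takes a resolution $\holom{\mu}{X'}{X}$, uses \cite[Lemma 4.6]{GKP16} to see that the reflexive pull-back $(\mu^* \sE)^{**}$ is stable with respect to the semiample and big divisor $\mu^* H$, applies Langer's restriction theorem \cite[Thm.5.2]{Lan04} (which, unlike the classical statements, allows such non-ample polarizations) on the smooth variety $X'$, and then identifies the general complete-intersection curve upstairs with the curve $C$ downstairs, which lies in the smooth locus of $X$ and in the locally free locus of $\sE$. This completely sidesteps the Quot-scheme, monodromy and descent arguments.

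Two concrete problems with your sketch. First, after reducing to the case where $\sE_C$ is semistable but not stable, the object you propose to propagate --- ``the maximal semistable subsheaf of $\sE_{C_t}$, of slope equal to $\mu_H(\sE_{C_t})$'' --- is not a proper subsheaf: the maximal destabilizing subsheaf of a semistable sheaf is the sheaf itself, so as stated no contradiction materializes. What is needed is a \emph{canonical} proper subsheaf of the same slope; its non-uniqueness is precisely what makes the stable case of \cite{MR84} harder than the semistable case, and Mehta--Ramanathan must work with a distinguished choice (e.g.\ a destabilizing subsheaf of minimal rank, or the socle) and verify that it is constant in the family and monodromy-invariant. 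Second, the descent of $\sF_{\mathcal{C}}$ from the incidence variety to a subsheaf of $\sE$ over a big open subset of $X$ is the most delicate part of \cite{MR84} (it involves comparing restrictions to curves of different degrees and an induction on $m$), and you dispatch it in one sentence. Neither step is obviously false over a normal $\Q$-factorial base, but ``the classical proof passes through essentially unchanged'' is exactly the assertion that would have to be proved --- the absence of such a reference is the stated reason the authors include this lemma at all. If you want a short, checkable proof, the resolution-plus-Langer argument is the efficient one; see also \cite[Prop.5.2]{Gra16b} for a variant that even avoids the $\Q$-factoriality hypothesis.
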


\begin{remark*}
As in Definition \ref{definitionmrgeneral} we will call $C$ a MR-general curve.
\end{remark*}

\begin{proof}
Let $\holom{\mu}{X'}{X}$ be a resolution of singularities. By \cite[Lemma 4.6]{GKP16} the reflexive
pull-back $(\mu^* \sE)^{**}$ is stable with respect to the semiample and big divisor $\mu^* H$.
By \cite[Thm.5.2]{Lan04} for all $k \gg 0$ and $D_1' \in |k \mu^* H|$ general 
the restriction $(\mu^* \sE)^{**}|_{D_1'}$ is stable with respect to $(\mu^* H)_{D_1'}$.
Thus we can find a $m_0 \in \N$ such that for all $m \geq m_0$ and
$D_1', \ldots, D_{n-1}'$ general elements in $|m \mu^* H|$, the restriction 
$(\mu^* \sE)^{**}|_{C'}$
to $C' := D_1' \cap \ldots \cap D_{n-1}'$ is stable. 
Now observe that by the projection formula $$H^0(X',  m \mu^* H) \simeq H^0(X, m H),$$
so the divisors $D_i'$ are strict transforms of general divisors $D_i \in |m H|$. 
Since $X$ is normal, the intersection $C:= D_1 \cap \ldots \cap D_{n-1}$ is in the smooth locus of $X$,
thus the curves $C'$ and $C$ can be identified. Since $\sE$ is torsion-free, hence locally free in codimension one, the sheaves $(\mu^* \sE)^{**}$ and $\sE$ identify in a neighbourhood
of $C=C'$, so $\sE_C =  (\mu^* \sE)^{**}|_{C'}$ is stable.
\end{proof}

\begin{remark} 
(1) We may delete the assumption that $X$ is $\Q$-factorial by replacing the reference \cite[Lemma 4.6]{GKP16}
with the arguments from \cite[Prop.5.2]{Gra16b}. 

(2) More generally, the restriction to $D_1 \cap \ldots \cap D_{k}$ with $1 \leq k \leq n-1$ is $H$-stable.
Indeed if the intermediate restriction is not stable, then the restriction to a MR-general curve is not stable.
\end{remark} 

We recall the notion of the algebraic holonomy, introduced by Balaji and Koll\'ar \cite{BK08}. For convenience, given a reflexive sheaf $\sE$ and $x \in X,$ 
we set
$$ E_x := \sE_x/m_x \sE_x,$$
where $m_x$  is  the maximal ideal at $x.$ 

\begin{definition} \label{def:holonomy} 
Let $X$ be a normal projective variety of dimension $n$, and let $H$ be an ample line bundle on $X.$ 
Let $\sE$ be a reflexive sheaf on $X$ that is $H$-stable with slope $\mu_H(\sE) = 0.$ Fix a smooth point $x \in X$ such that $\sE$ is free near $x$. The algebraic holonomy group of $\sE$ at $x$ 
is the (unique) smallest subgroup 
$$H_x(\sE) \subset GL(E_x) $$
such that the following holds: for any smooth curve $C$ with fixed point $c  \in C$ and any morphism $g: C \to X$ with $g(c) = x$ and $\sE$ locally free near $g(C),$ and such that $g^*(\sE)$ is poly-stable,
the Narasimhan-Seshadri representation $\rho: \pi_1(D,c) \to GL(g^*(E_x))$ has image in $H_x(\sE).$
\end{definition} 

For details and explanations we refer to \cite{BK08}. The following useful lemma is well-known to specialists:

\begin{lemma} \label{lemmaallstable}
Let $C$ be a smooth projective curve of genus at least two, 
and let $\sE$ be a stable vector bundle of rank $r$ on $C$ such that $c_1(\sE)=0$.
Suppose that the algebraic holonomy group $H_x(\sE)$ is  $\mbox{SL}(E_x)$ or $\mbox{Sp}(E_x)$ for a suitable nondegenerate symplectic 
form on $E_x$. Then the symmetric powers $S^m \sE$ are stable for all $m \in \N$.
\end{lemma}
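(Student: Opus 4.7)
The plan is to translate the statement into representation theory via the Narasimhan--Seshadri correspondence, and then deduce it from classical facts about representations of $\mbox{SL}$ and $\mbox{Sp}$.

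First, since $\sE$ is stable of degree zero on the curve $C$, the Narasimhan--Seshadri theorem produces an irreducible unitary representation $\rho : \pi_1(C,x) \to U(E_x)$ with associated bundle $\sE$. By functoriality of the correspondence, the symmetric power $S^m \sE$ corresponds to the representation $S^m \rho$ on $S^m E_x$. Since $S^m \sE$ is a priori polystable (symmetric powers preserve polystability in degree zero on a curve, as they come from a unitary representation), proving stability of $S^m \sE$ is equivalent to proving that $S^m \rho$ is an irreducible unitary representation.

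Next I invoke the definition of the algebraic holonomy group from \cite{BK08}. The hypothesis that $H_x(\sE) = \mbox{SL}(E_x)$ or $\mbox{Sp}(E_x)$ means, essentially by construction, that the Zariski closure $G$ of the image $\rho(\pi_1(C,x))$ in $GL(E_x)$ is either $\mbox{SL}(E_x)$ or $\mbox{Sp}(E_x)$ (up to passing to a connected component, which is harmless here). A subspace $W \subset S^m E_x$ is stable under $S^m \rho(\pi_1(C,x))$ if and only if it is stable under the induced algebraic action of $G$ on $S^m E_x$, because $\rho(\pi_1)$ is Zariski dense in $G$ and the action of $G$ on $S^m E_x$ is algebraic. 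Hence irreducibility of $S^m \rho$ is equivalent to irreducibility of the $G$-module $S^m E_x$.

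Finally, the representation-theoretic input: the standard representation $E_x$ of $\mbox{SL}(E_x)$ is the fundamental representation of highest weight $\omega_1$, and all its symmetric powers $S^m E_x$ are irreducible (they are the irreducible representations with highest weight $m \omega_1$). The analogous statement holds for the defining representation of $\mbox{Sp}(E_x)$: the symmetric powers $S^m E_x$ are irreducible (unlike the exterior powers, which split off a trivial factor). Applying this with $G = H_x(\sE)$ shows that $S^m \rho$ is irreducible, hence $S^m \sE$ is stable.

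The only delicate point is the precise translation between the algebraic holonomy group of \cite{BK08} and the Zariski closure of the monodromy; this is contained in the definition and is the reason the statement is said to be well-known to specialists. The representation-theoretic step is standard and poses no obstacle.
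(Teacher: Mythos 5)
Your argument is correct and follows essentially the same route as the paper: Narasimhan--Seshadri to reduce stability of $S^m\sE$ to irreducibility of $S^m\rho$, Zariski density of the monodromy in the algebraic holonomy group (the paper cites \cite[Thm.1(2)]{BK08} for this) to reduce to irreducibility of the $G$-module $S^m E_x$, and the classical fact that symmetric powers of the standard representations of $\mbox{SL}$ and $\mbox{Sp}$ are irreducible. No substantive difference.
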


\begin{proof}
By \cite[Sect.12, Cor.1]{NS65} the vector bundle $\sE$ is defined by an irreducible unitary representation 
$\rho: \pi_1(C, x) \rightarrow \mbox{U}(E_x)$, so a symmetric power $S^m \sE$ is defined by
the unitary representation $S^m \rho$ induced on the symmetric power $S^m E_x$. By  \cite[Sect.12, Cor.2]{NS65}
the bundle $S^m \sE$ is stable if and only if the representation $S^m \rho$ is irreducible. Suppose that this is not the case.
Since the image of $\rho$ is dense in the algebraic holonomy group $H_x(\sE)$ \cite[Thm.1(2)]{BK08}, the induced representation 
of $H_x(\sE)$ on $S^m E_x$ is reducible. Yet it is a classical result of the representation theory
of Lie groups \cite{Wey49} that the symmetric representations of $\mbox{SL}(E_x)$ or $\mbox{Sp}(E_x)$ are irreducible.
\end{proof}

\subsection{Subvarieties of projectivised bundles}

We will now prove the key lemma of this paper.

\begin{lemma} \label{lemmacurve}
Let $C$ be a smooth projective curve. Let $\sE$ be a locally free sheaf on $C$ such that $c_1(\sE)=0$. 
Let $Z \subset \PP(\sE)$ be a subvariety of dimension $d$. Denote by $\mathcal I_Z$ the ideal sheaf of $Z$ in $\PP(\sE)$, and let $l \in \N$ be such that $\pi_* (\sI_Z(l))$ has positive rank and  
such that $R^1 \pi_* (\sI_Z(l))=0$. 

Suppose that the locally free sheaf $S^l \sE$ is stable. Then exactly one of the following holds:
\begin{itemize}
\item $\zeta^d \cdot Z>0$; or
\item $Z$ is covered by curves $C'$ such that $\zeta \cdot C' = 0$ and  such that the map $C' \rightarrow C$ is \'etale.
\end{itemize}
\end{lemma}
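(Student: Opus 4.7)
The strategy is to use the stability of $S^l\sE$ to produce an ample vector bundle $Q$ on $C$ from $Z$, and then analyse $Z$ via a morphism $\alpha:Z\to\PP(Q)$ over $C$, pulling back the ample tautological class to $l\zeta|_Z$; the two alternatives in the conclusion correspond to whether $\alpha$ is generically finite.

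If $\pi|_Z$ is not surjective then $Z\subset\PP(\sE)_c\cong\PP^{r-1}$ for some $c\in C$, so $\zeta|_Z$ is the restriction of the hyperplane class and $\zeta^d\cdot Z=\deg Z>0$. So assume $\pi|_Z$ is surjective. Pushing forward $0\to\sI_Z(l)\to\sO_{\PP(\sE)}(l)\to\sO_Z(l)\to 0$ and using $R^1\pi_*\sI_Z(l)=0$ together with the standard vanishing $R^i\pi_*\sO_{\PP(\sE)}(l)=0$ for $i\geq 1$ yields
\[
0\to V\to S^l\sE\to Q\to 0,
\]
where $V:=\pi_*\sI_Z(l)$ has positive rank by hypothesis and $Q:=\pi_*\sO_Z(l)$ has positive rank since $\pi|_Z$ is surjective. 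The sequence also forces $R^1\pi_*\sO_Z(l)=0$, which via base change identifies $Q\otimes k(c)=H^0(Z_c,\sO(l)|_{Z_c})$.

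Since $V$ is a proper subsheaf of positive rank in the slope-zero stable bundle $S^l\sE$, one has $\mu(V)<0$; every quotient of $Q$ is also a quotient of $S^l\sE$, hence has strictly positive slope, so $\mu_{\min}(Q)>0$, i.e.\ $Q$ is ample on the curve $C$. Thus $\eta:=c_1(\sO_{\PP(Q)}(1))$ is ample on $\PP(Q)$. On each fiber $Z_c\subset\PP^{r-1}$ the restriction of the base-point-free system $|\sO_{\PP^{r-1}}(l)|$ is base-point-free, so the complete system $|Q_c|$ is too, and the adjunction $(\pi|_Z)^*Q\to\sO_Z(l)$ is everywhere surjective. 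It therefore defines a morphism $\alpha:Z\to\PP(Q)$ over $C$ satisfying $\alpha^*\eta=l\zeta|_Z$. If $\alpha$ is generically finite onto its image, the projection formula gives
\[
l^d(\zeta^d\cdot Z)=(\alpha^*\eta)^d\cdot[Z]=\eta^d\cdot\alpha_*[Z]>0,
\]
because $\alpha_*[Z]$ is a nontrivial effective $d$-cycle and $\eta$ is ample --- this is the first alternative.

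If instead $\alpha$ is not generically finite, a positive-dimensional fiber $F$ of $\alpha$ satisfies $\zeta|_F=l^{-1}\alpha^*\eta|_F\equiv 0$, and the family of such fibers sweeps $Z$ out by curves $C'$ with $\zeta\cdot C'=0$ --- the second alternative. The main obstacle lies precisely here: because $\alpha$ is a morphism over $C$ its fibers are contained in individual $\pi$-fibers $\PP^{r-1}$ on which $\zeta$ is the ample hyperplane class, so naive positive-dimensional fibers cannot arise. Producing a covering family of curves genuinely étale over $C$ thus requires refining the construction --- typically by passing to a finite étale cover $f:\tilde C\to C$ on which $f^*\sE$ acquires new rank-one quotients of degree zero, and realising the covering curves $C'\subset Z$ as images in $\PP(\sE)$ of the corresponding sections of $\PP(f^*\sE)\to\tilde C$; verifying that $C'\to C$ is étale, rather than merely dominant, is the delicate geometric content.
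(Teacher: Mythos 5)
Your first half runs parallel to the paper's: after disposing of the case where $\pi|_Z$ is not surjective, both arguments push down the ideal-sheaf sequence, use the rank hypothesis together with stability of $S^l\sE$ to show that $Q:=(\pi|_Z)_*\sO_Z(l)$ is an ample quotient bundle on $C$ (the paper via nefness of $S^l\sE$ plus Hartshorne's theorem, you via $\mu_{\min}(Q)>0$ -- both fine), and deduce $\zeta^d\cdot Z>0$ whenever the map defined by $\sO_Z(l)$ is generically finite. Up to that point the proposal is correct. (One small inaccuracy: $R^1\pi_*(\sI_Z(l))=0$ does not force $R^1\pi_*(\sO_Z(l))=0$; the long exact sequence only gives an injection of the latter into $R^2\pi_*(\sI_Z(l))$.)

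The genuine gap is the second alternative, which is exactly the part of the lemma that the rest of the paper uses. Your dichotomy is organised around the \emph{relative} morphism $\alpha\colon Z\to\PP(Q)$ over $C$, and, as you yourself observe, every fibre of $\alpha$ sits inside a $\pi$-fibre $\PP^{r-1}$ on which $\zeta$ is the ample hyperplane class, so $\alpha$ can never have positive-dimensional fibres. Your case distinction therefore cannot produce the covering family: the curves $C'$ of the second alternative are \emph{\'etale over} $C$, hence transverse to the $\pi$-fibres, and are invisible to any morphism over $C$. You acknowledge this and defer "the delicate geometric content", but that deferred content is the actual statement to be proved. (Note also the unresolved tension in your own write-up: finiteness of $\alpha$ together with ampleness of $\eta$ would make $\sO_Z(l)$ ample and the first alternative automatic -- you should either justify and exploit that, or abandon $\alpha$.) The paper avoids the relative picture entirely. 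From the surjection $\varphi^*Q\twoheadrightarrow\sO_Z(l)$ with $Q$ ample on a curve (so $S^mQ$ is globally generated for $m\gg0$) it deduces only that $\sO_Z(l)$ is \emph{semiample}, and then considers the \emph{absolute} semiample fibration $\tau\colon Z\to B$ defined by a multiple of $\sO_Z(l)$: one has $\zeta^d\cdot Z=0$ exactly when $\tau$ is not birational, in which case the curves in the $\tau$-fibres cover $Z$ and satisfy $\zeta\cdot C'=0$. The \'etaleness is then a separate argument: since $\sO_Z(l)$ is semiample, $\sO_{C'}(l)$ is numerically trivial and semiample, hence torsion, so for suitable $m$ the sheaf $f_*\sO_{C'}\simeq f_*\sO_{C'}(m)$ is a quotient of the nef bundle $S^m\sE$ and therefore nef; by Lemma \ref{lemmaetale} (resting on \cite[Cor.1.14]{PS00}, which says $(f\circ\nu)_*\sO_{\tilde C}$ has antiample determinant unless $f\circ\nu$ is \'etale) this forces $C'$ to be smooth and $f$ \'etale. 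None of these three steps -- semiampleness, the absolute Iitaka-type fibration, and the nefness-of-$f_*\sO_{C'}$ criterion for \'etaleness -- appears in your proposal, so as written it does not prove the lemma.
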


\begin{remark*} 
In the situation above, $S^l \sE$ is stable with $c_1(\sE)=0$. Thus $S^l \sE$ is nef by Proposition \ref{prop:triv}.
Combined with \cite[Thm.6.2.12]{Laz04b} this implies that the symmetric powers $S^m \sE$ are semistable and nef for all $m \in \N$. 
\end{remark*} 

\begin{proof}[Proof of Lemma \ref{lemmacurve}]
If $Z$ is contained in a fibre of $\pi,$ the statement is trivial, so 
suppose that $\varphi:=\pi|_Z$ is surjective. We denote by $\sO_Z(l)$ the restriction of $\sO_{\PP(\sE)}(l)$ to $Z$.

By our hypothesis, the exact sequence 
$$
0 \rightarrow \sI_Z(l) \rightarrow \sO_{\PP(\sE)}(l) \rightarrow \sO_Z(l) \rightarrow 0
$$
induces an exact sequence
$$
0 \rightarrow \pi_* (\sI_Z(l)) \rightarrow \pi_* (\sO_{\PP(\sE)}(l)) \simeq S^l \sE \rightarrow \varphi_* (\sO_Z(l)) \rightarrow 0
$$
and $\rk S^l \sE> \rk \varphi_* (\sO_Z(l))$. Since $S^l \sE$ is nef by Proposition \ref{prop:triv}, its quotient
$\varphi_* (\sO_Z(l))$ is also nef. If $\varphi_* (\sO_Z(l))$ were not ample, by Hartshorne's theorem \cite[Thm.6.4.15]{Laz04b} there would exist a quotient 
$$
\varphi_* (\sO_Z(l)) \twoheadrightarrow Q
$$ 
such that $c_1(Q)=0$. Yet this quotient
would destabilise $S^l \sE$, so $\varphi_* (\sO_Z(l))$ has to be ample. Since $\sO_Z(l)$
is $\varphi$-globally generated, we have a surjective morphism
$$
\varphi^* \varphi_* (\sO_Z(l)) \twoheadrightarrow \sO_Z(l). 
$$
Since $\varphi^* \varphi_* (\sO_Z(l))$ is semiample, its quotient $\sO_Z(l)$ is a semiample
line bundle. Denote by $\holom{\tau}{Z}{B}$ the morphism with connected fibers defined by some positive multiple
of $\sO_Z(l)$. Then $\tau$ is not birational if and only if 
$$
\zeta^d \cdot Z= c_1(\sO_Z(l))^d = 0.
$$ 
Suppose now that $\zeta^d \cdot Z=0$, so $\tau$ is not birational. The restriction of $\zeta$ to any $\tau$-fibre is numerically trivial, 
so the curves $C'$ contained in the $\tau$-fibres define a covering family such that $\zeta \cdot C'=0$.

Let us now show that for any curve $C' \subset Z$ such that $\zeta \cdot C'=0$ the map $f:= \pi|_{C'}$ 
is \'etale. Note that since $\sO_Z(l)$ is semiample, the restriction $\sO_{C'}(l)$ to $C'$ is a torsion line bundle. 
Thus for a sufficiently divisible $m \gg 0$ the exact sequence
$$
0 \rightarrow \sI_{C'}(m)  \rightarrow \sO_{\PP(\sE)}(m) \rightarrow \sO_{C'}(m) \simeq \sO_{C'} \rightarrow 0
$$
pushes down to an exact sequence
$$
0 \rightarrow \pi_* (\sI_{C'}(m)) \rightarrow S^m \sE \rightarrow f_* (\sO_{C'}(m)) \simeq f_* (\sO_{C'}) \rightarrow 0.
$$
Since $S^m \sE$ is nef, its quotient $f_* (\sO_{C'})$ is a nef vector bundle. 
Now we conclude with Lemma \ref{lemmaetale}.
\end{proof}

\begin{lemma} \label{lemmaetale}
Let $C$ be a smooth projective curve, and let $\holom{f}{C'}{C}$ be a surjective morphism
from a projective, integral curve. If $f_* (\sO_{C'})$ is nef, then $C'$ is smooth and $f$ is \'etale.
\end{lemma}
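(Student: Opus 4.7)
The plan is to reduce to a morphism between smooth curves by passing to the normalisation of $C'$, and then compute the degree of $\det f_*\sO_{C'}$ via Riemann--Hurwitz, showing that it is a sum of two non-positive contributions measuring the ramification of $g$ and the non-normality of $C'$, both of which the nefness hypothesis forces to vanish.

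First I would let $\nu \colon \tilde{C}' \to C'$ be the normalisation and set $g := f \circ \nu \colon \tilde{C}' \to C$, which is a finite surjective morphism of degree $d = \deg f$ between smooth projective curves. The normalisation sequence
$$0 \to \sO_{C'} \to \nu_*\sO_{\tilde{C}'} \to \sC \to 0,$$
with $\sC$ the conductor sheaf (a torsion sheaf supported on the non-normal locus of $C'$), pushes forward under the finite morphism $f$ to an exact sequence
$$0 \to f_*\sO_{C'} \to g_*\sO_{\tilde{C}'} \to f_*\sC \to 0.$$
Since both $f_*\sO_{C'}$ and $g_*\sO_{\tilde{C}'}$ are locally free of rank $d$ on $C$, and $f_*\sC$ is a torsion sheaf whose length equals $\mathrm{length}(\sC)$, taking degrees of determinants gives
$$\deg \det f_*\sO_{C'} = \deg \det g_*\sO_{\tilde{C}'} - \mathrm{length}(\sC).$$

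Next I would evaluate the first term on the right by a standard computation. As $g$ is finite, $\chi(g_*\sO_{\tilde{C}'}) = \chi(\sO_{\tilde{C}'}) = 1 - g(\tilde{C}')$; comparing with Riemann--Roch on $C$ applied to the rank $d$ bundle $g_*\sO_{\tilde{C}'}$, and substituting Riemann--Hurwitz $2g(\tilde{C}') - 2 = d(2g(C) - 2) + \deg R_g$ (with $R_g$ the ramification divisor of $g$), one obtains the familiar identity
$$\deg \det g_*\sO_{\tilde{C}'} = -\tfrac{1}{2}\deg R_g.$$
Combined with the previous displayed equation,
$$\deg \det f_*\sO_{C'} = -\tfrac{1}{2}\deg R_g - \mathrm{length}(\sC) \le 0.$$

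Finally, the hypothesis that $f_*\sO_{C'}$ is nef forces $\deg \det f_*\sO_{C'} \geq 0$, so the two non-positive terms on the right must each vanish. The vanishing $\deg R_g = 0$ makes $g$ unramified, hence étale as a finite morphism between smooth curves; the vanishing $\mathrm{length}(\sC) = 0$ forces $\sC = 0$, so $\nu$ is an isomorphism. Therefore $C'$ is normal and hence (being one-dimensional) smooth, and $f = g \circ \nu^{-1}$ is étale. I do not expect a genuine obstacle: once the problem is organised as above, the proof is essentially the observation that singularities of $C'$ and ramification of $g$ both contribute negatively to $\deg \det f_*\sO_{C'}$, so nefness forces both to disappear.
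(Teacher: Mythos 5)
Your proof is correct, but the decisive step is genuinely different from the paper's. Both arguments start the same way: normalise $C'$ via $\nu\colon \tilde C'\to C'$, push the normalisation sequence down to $C$, and reduce to showing that the torsion quotient and the ramification of $g=f\circ\nu$ both vanish. The paper then argues by positivity propagation: since $f_*\sO_{C'}$ is nef and maps generically isomorphically to $g_*\sO_{\tilde C'}$, the latter is nef by \cite[Ex.6.4.17]{Laz04b}; on the other hand the Peternell--Sommese result \cite[Cor.1.14]{PS00} says $g_*\sO_{\tilde C'}$ is antinef with antiample determinant unless $g$ is \'etale, which forces $g$ \'etale and then $Q=0$ by comparing the two bundles. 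You instead compute
$\deg\det f_*\sO_{C'} = -\tfrac12\deg R_g - \mathrm{length}(\sC)$
directly from Riemann--Roch and Riemann--Hurwitz, and let nefness of the determinant kill the two non-positive terms separately. Your route is more elementary and self-contained (it replaces both citations by an Euler-characteristic computation that is standard for curves); the paper's route is shorter on the page and is the one that generalises to finite covers of higher-dimensional bases, where \cite{PS00} still applies but a one-line determinant count does not. Two minor remarks: the quotient $\nu_*\sO_{\tilde C'}/\sO_{C'}$ is not literally the conductor ideal sheaf but the torsion sheaf of length $\sum_p\delta_p$ supported on the non-normal locus --- this is purely terminological, since you only use that it is torsion and vanishes exactly when $C'$ is normal; and you silently use that $f$ is finite and that $f_*\sO_{C'}$ is locally free of rank $\deg f$, both of which are immediate here ($f$ is proper with finite fibres, and a torsion-free coherent sheaf on a smooth curve is locally free) but worth a sentence.
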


\begin{proof}
Let $\holom{\nu}{\tilde C}{C'}$ be the normalisation of $C'$. Then we have an exact sequence
$$
0 \rightarrow \sO_{C'} \rightarrow \nu_* (\sO_{\tilde C}) \rightarrow Q \rightarrow 0,
$$
where $Q$ is a torsion sheaf that is non-zero if and only if $\nu$ is an isomorphism.
Pushing down to $C$ we obtain an exact sequence
$$
0 \rightarrow f_* (\sO_{C'}) \rightarrow (f \circ \nu)_* (\sO_{\tilde C}) \rightarrow f_* Q \rightarrow 0.
$$
Since $f_* (\sO_{C'})$ is a nef vector bundle on a smooth curve and the map
$ f_* (\sO_{C'}) \rightarrow (f \circ \nu)_* (\sO_{\tilde C})$ is generically surjective, the vector bundle
$(f \circ \nu)_* (\sO_{\tilde C})$ is nef \cite[Ex.6.4.17]{Laz04b}. Yet by \cite[Cor.1.14]{PS00} this vector bundle is antinef
and has antiample determinant unless $f \circ \nu$ is \'etale. Thus we obtain that  
$(f \circ \nu)_* (\sO_{\tilde C})$ is numerically flat, the map $f \circ \nu$ is \'etale and
$f_* (\sO_{C'}) \simeq (f \circ \nu)_* (\sO_{\tilde C})$. In particular $Q=0$ and $\tilde C \simeq C'$.
\end{proof}

\begin{proof}[Proof of  Proposition \ref{propositioncurve}]
Arguing by contradiction, suppose that there exists a subvariety $Z \subset \PP(\sE)$ of dimension $d$ such that $\zeta^d \cdot Z=0$.
Since $\sO_{\PP(\sE)}(1)$ is $\pi$-ample we know that the assumptions of Lemma \ref{lemmacurve}
are satisfied for $l \gg 0$. By the lemma there exists a curve $C' \subset \PP(\sE)$ such that
$\zeta \cdot C'=0$ and $f: \pi|_{C'} : C' \rightarrow C$ is \'etale.
Choose now $m \in \N$ such that $\pi_* (\sI_{C'}(m))$ has positive rank and  
$R^1 \pi_* (\sI_{C'}(m))=0$. Then we have an exact sequence
$$
0 \rightarrow \pi_* (\sI_{C'}(m)) \rightarrow S^m \sE \rightarrow f_* (\sO_{C'}(m)) \rightarrow 0,
$$
and a Grothendieck-Riemann-Roch computation shows that $c_1(f_* (\sO_{C'}(m)))=0$.
Thus $S^m \sE$ is not stable, a contradiction.
\end{proof}

\section{Reflexive sheaves with pseudoeffective tautological class}

\subsection{Restricted base locus of the tautological class}
\label{subsectionrestricted}

We start by setting up some notations. 

\begin{notation} {\rm 
Let $D$ be a $\Q$-Cartier $\Q$-divisor on a normal projective variety $P$. Then the stable base locus is defined
as 
$$
\mathbb B(D) := \bigcap_{m} \mbox{Bs}(mD),
$$
where the intersection is taken over all $m \in \N$ such that $m D$ is the class of a Cartier divisor, with ${\rm Bs}(mD)$ denoting the base locus of $mD.$ 
The restricted base locus is defined as 
$$
B_-(D) = \bigcup_{\mbox{$A$ ample $\Q$-divisor}} \mathbb B(D+A).
$$ }

\end{notation}

By \cite[Prop.1.19]{ELMNP06} one has
\begin{equation} \label{computeBminus}
B_-(D) = \cup_{n \in \N^*} \mathbb B(D+\frac{1}{n}A)
\end{equation}
where $A$ is an arbitrary ample divisor. Note that if $A$ is very ample then
\begin{equation} \label{inclusionBminus}
\mathbb B(D+\frac{1}{n}A) \subset \mathbb B(D+\frac{1}{n'}A)
\end{equation}
if $n \leq n'$. 

\begin{notation} {\rm  Let $Y$ be a projective manifold, $D$ a pseudoeffective $\mathbb Q-$divisor and $\Gamma $ a prime divisor on $Y$. 
By 
$$ 
\sigma_{\Gamma} (D) = \lim_{\epsilon \to 0^+} \inf
\{
\mbox{mult}_\Gamma (L') \ | \ L' \geq 0 \ \mbox{and} \ [L'] = [L+\epsilon A]
\} 
$$
we define the asymptotic multiplicity or vanishing order of $D$ along $\Gamma$, as defined in \cite[III, Lemma 1.6]{Nak04}; see also \cite[Defn.2.15]{FL17}. 
By definition it  is a numerical invariant of $D$. }

\end{notation} 

We extend this definition to higher codimension, following \cite[III, Defn. 2.2]{Nak04}. 

\begin{notation} {\rm 
Let $P$ be a  projective manifold, and let $D$ be a pseudoeffective $\Q$-Cartier 
divisor on $P$. 
Let $Z \subset P$ be a subvariety, and let 
$$\holom{f}{Y}{P} $$ be the composition of an embedded resolution of $Z$, and the blow-up of the strict transform of $Z$. 
Let $E_Z \subset Y$
be the unique prime divisor mapping onto $Z$. Then we define
$$
\sigma_Z(D) := \sigma_{E_Z}(f^* D).
$$ 
It is easy to check that this definition does not depend on the choice of $f,$ using \cite[III, Lemma 5.15] {Nak04},

Note that by \cite[III, Lemma 1.7(2)]{Nak04} we have
$$
\sigma_Z(D) = \lim_{\epsilon \to 0} \sigma_{E_Z}(f^* (D+\epsilon A))
$$
where $A$ is an arbitrary ample divisor on $P$. Thus $\sigma_Z(D)$ is the asymptotic
vanishing order of $D$ in the generic point of the subvariety $Z$. 

Suppose now that $Z$ is an irreducible component of $B_-(D)$. 
By \cite[Lemma 6.13]{Dru17} we have
\begin{equation} \label{sigmapositive}
\sigma_Z(D) > 0.
\end{equation}

By \cite[III, Lemma 1.7(2)]{Nak04} (applied to $\sigma_{E_Z}(\bullet)$, cf. also 
\cite[p.93, Remark (1)]{Nak04}) the function
$$
\sigma_Z : \overline{\mbox{Eff}}(P) \rightarrow \R
$$
is lower semicontinuous and continuous when restricted to the big cone. }
\end{notation} 

The following technical lemma, an analogue of \cite[Lemma 6.13]{Dru17}, will be very important. 

\begin{lemma} \label{lemmacomputeclass}
Let $X$ be a normal projective variety, and let $H$ be an ample line bundle on $X$. 
Let $\sE$ be a reflexive sheaf of rank $r$ on $X$ that is $H$-semistable with $\mu_H(\sE)=0$. 
Let $$\holom{\pi}{\PP_0(\sE)}{X}$$ be a modification of $\PP(\sE)$ as in Definition \ref{definitiontautological},
and let $\zeta$ be a tautological class on $\PP_0(\sE)$.
Suppose that $\sE$ is pseudoeffective, or equivalently, that $\zeta$ is a pseudoeffective class (Lemma \ref{lemmapseff}).

Fix a positive integer $k \in \N$ and suppose the following:

\begin{itemize}
\item for every irreducible component $W \subset B_-(\zeta)$ of codimension at most $k-1$  the image $\pi(W)$ has codimension at least $2$.
\end{itemize} 

Let $Z \subset B_-(\zeta)$ be an irreducible component of codimension $k$,
and let $C \subset X$ be a very general MR-general smooth curve (with respect to $H$). 

Then there exists $a>0$ such that 
\begin{equation} \label{basicineq} 
\left[
(\zeta|_{\fibre{\pi}{C}})^{k} - a (Z \cap \fibre{\pi}{C}) 
\right] 
\cdot H_1 \cdots H_{r-k} \geq 0
\end{equation} 
for any collection of nef divisors $H_1, \ldots, H_{r-k}$ on $\fibre{\pi}{C}$.
\end{lemma}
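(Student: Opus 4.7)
The strategy is to reduce the claim to a pseudo-effectivity statement on $P_C := \pi^{-1}(C)$. Since $C$ is MR-general, $\sE_C$ is semistable with vanishing first Chern class and therefore nef by Proposition \ref{prop:triv}; consequently $\zeta|_{P_C}$ is nef. Once it is shown that
$(\zeta|_{P_C})^k - a\,[T] \in N^k(P_C)$
is pseudo-effective for some $a>0$, with $T := Z \cap P_C$, pairing with the product $H_1 \cdots H_{r-k}$ of nef divisors produces a non-negative number by taking limits of effective representatives, so \eqref{basicineq} follows. The hypothesis on components of $B_-(\zeta)$ of codimension at most $k-1$ enters here: these project to subvarieties of codimension at least $2$ in $X$, so a very general $C$ misses their (countably many) images, and $P_C$ meets $B_-(\zeta)$ only in codimension $\geq k$. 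By generic transversality, $T$ is then pure of codimension $k$ and $P_C$ meets $Z$ transversally along $T$.

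The quantitative input is $c := \sigma_Z(\zeta) > 0$ from \eqref{sigmapositive}. Fix any ample divisor $A$ on $\PP_0(\sE)$. Using the lower semicontinuity of $\sigma_Z$, I would show that for all sufficiently small $\epsilon>0$ every effective divisor in the class $m(\zeta + \epsilon A)$ vanishes along $Z$ to order at least $m(c - o(1))$ as $\epsilon \to 0$. Since $\zeta + \epsilon A$ is big, I can then select $k$ general sections, producing effective divisors $L_1, \ldots, L_k \in |m(\zeta + \epsilon A)|$ whose restrictions $L_i|_{P_C}$ intersect properly in codimension $k$ on $P_C$. The transversality of $P_C$ with $Z$ preserves vanishing orders along $T$, so each $L_i|_{P_C}$ also vanishes along $T$ to order at least $m(c - o(1))$.

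A classical Hilbert--Samuel type estimate (equivalently, a direct computation on the blow-up of $T$ inside $P_C$) then forces the coefficient of $T$ in the intersection cycle $L_1|_{P_C} \cdot L_2|_{P_C} \cdots L_k|_{P_C}$ to be at least $m^k (c - o(1))^k$. Dividing by $m^k$ exhibits an effective codimension-$k$ cycle on $P_C$ representing the class $((\zeta + \epsilon A)|_{P_C})^k - (c - o(1))^k [T]$. Letting $\epsilon \to 0$ and invoking the closedness of the pseudo-effective cone together with continuity of intersection products in $N^k(P_C)$ yields that $(\zeta|_{P_C})^k - c^k [T]$ is pseudo-effective; setting $a := c^k > 0$ closes the argument.

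The main obstacle is the intersection-cycle step, namely translating a lower bound on the vanishing order of each $L_i$ along the codimension-$k$ subvariety $Z$ into a lower bound on the coefficient of $T$ in the proper intersection on $P_C$. This rests both on the transversality of $P_C$ with $Z$ (enabled by the hypothesis on low-codimension components of $B_-(\zeta)$ together with the very-generality of $C$) and on the classical multiplicity estimate for divisors vanishing to high order along a smooth irreducible center.
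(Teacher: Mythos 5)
Your proposal is correct and follows essentially the same route as the paper: perturb $\zeta$ by a small ample class, use $\sigma_Z(\zeta)>0$ and its lower semicontinuity to bound the multiplicity along $Z$ of general effective divisors in the perturbed class, use the hypothesis on low-codimension components of $B_-(\zeta)$ (plus very-generality of $C$) to make the $k$-fold intersection proper on $\fibre{\pi}{C}$, extract the coefficient of $Z\cap\fibre{\pi}{C}$ via the standard multiplicity estimate, and pass to the limit. The only difference is that you track the constant as $c-o(1)$ and obtain $a=\sigma_Z(\zeta)^k$ directly, which is the sharper value the paper records in the remark following its proof (the proof itself settles for $(\sigma_Z(\zeta)/2)^k$).
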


\begin{proof}  We will use the shorthand $P = \PP_0(\sE).$ Since  $C \subset X$ is very general and since $B_-(\zeta)$ has 
at most countably many irreducible components we have
$$
\pi(W) \cap C = \emptyset
$$
for every irreducible component $W \subset B_-(\zeta)$ of codimension at most $k-1$,

Moreover, the sheaf $\sE$ is locally free in a neighbourhood of $C$
and the restriction $\sE_C$ is a nef vector bundle by Proposition \ref{prop:triv}  and \cite[Thm.1.2]{Fle84}. 
By construction of $P$ we have $\fibre{\pi}{C} \simeq \PP(\sE_C)$ and $\zeta|_{\fibre{\pi}{C}}=c_1(\sO_{\PP(\sE_C)}(1))$.
Hence $\zeta|_{\fibre{\pi}{C}}$ is a nef divisor.

If $\pi(Z)$ has codimension at least two in $X$, then the intersection $Z \cap \fibre{\pi}{C}$
is empty, and consequently the assertion of Lemma \ref{lemmacomputeclass} is trivially true. Thus we can assume from now on that
$$
\codim_X (\pi(Z)) \leq 1.
$$
Fix a very ample divisor $A$ on $P$. By \eqref{computeBminus} and 
\eqref{inclusionBminus} we find a $n_0 \in \N^*$ such that for all $n \geq n_0$
we have 
$$
Z \subset \mathbb B(\zeta+\frac{1}{n}A).
$$
We set $a_1 := \sigma_Z(\zeta)$ and observe that $a_1 > 0$ by \eqref{sigmapositive}. 
Since $\sigma_Z$ is a lower semicontinuous function we may suppose,  possibly enlarging $n_0$,
that
\begin{equation} \label{lowerbound}
\sigma_Z(\zeta+\frac{1}{n}A) \geq \frac{a_1}{2}
\end{equation}
for all $n \geq n_0$.
Since $$\mathbb B(\zeta+\frac{1}{n}A) \subset B_-(\zeta),$$ our hypothesis implies 
that if an irreducible component $W \subset \mathbb B(\zeta+\frac{1}{n}A)$ has codimension 
at most $k-1$, then $\pi(W)$ has codimension at least $2$ in $X$. 

In order to verify the inequality \eqref{basicineq}, 
let   $D_1, \ldots, D_k$ be very general effective $\Q$-divisors on $P$ such that 
$D_i \sim_\Q \zeta+\frac{1}{n}A.$ Then we have the following:
if 
$$
W \subset D_1 \cap \ldots \cap D_k
$$ 
is an irreducible component of codimension 
at most $k-1$, then $W$ is an irreducible component of $\mathbb B(\zeta+\frac{1}{n}A)$ and therefore
$\codim_X(\pi(W)) \geq 2.$ 
Thus for a very general curve $C \subset X$, the
intersection
$$
D_1 \cap \ldots \cap D_k \cap \fibre{\pi}{C}
$$
has pure codimension $k$ in $\fibre{\pi}{C}$. Since $C$ is general, the intersection
$Z \cap \fibre{\pi}{C}$ is reduced, thus by \eqref{lowerbound} 
$$
\mbox{mult}_{Z \cap \fibre{\pi}{C}} (D_j \cap \fibre{\pi}{C}) \geq \frac{a_1}{2},
$$
where $\mbox{mult}_{Z \cap \fibre{\pi}{C}} (\bullet)$ is the order of vanishing in any general point of $Z \cap \fibre{\pi}{C}$.
Consequently 
$$
\left(
D_1 \cap \ldots \cap D_k \cap \fibre{\pi}{C}
\right)
 - (\frac{a_1}{2})^k (Z \cap \fibre{\pi}{C})
$$ 
is an effective cycle of pure codimension $k$ in $\fibre{\pi}{C}$. Since
$$
[D_1 \cap \ldots \cap D_k \cap \fibre{\pi}{C}] = ((\zeta+\frac{1}{n}A)|_{\fibre{\pi}{C}})^k,
$$
we conclude 
$$
[
((\zeta+\frac{1}{n}A)|_{\fibre{\pi}{C}})^k - (\frac{a_1}{2})^k (Z \cap \fibre{\pi}{C})
]
\cdot H_1 \cdots H_{r-k} \geq 0
$$
for any collection of nef divisor $H_i$ on $\fibre{\pi}{C}$. Since $\frac{a_1}{2}$
does not depend on $n \geq n_0$ the statement now follows by setting $a:= (\frac{a_1}{2})^k$
and passing to the limit $n \to \infty$.
\end{proof}

\begin{remark}
In the proof above we can replace $\frac{a_1}{2}$ by $\frac{k-1}{k} a_1$ for an arbitrary
$k \in \N^*$. Taking the limit $k \to \infty$ the statement thus holds for $a=a_1^k$.
This is the natural bound following from more general considerations with
$(1,1)$-currents, see \cite[Cor.10.5]{Dem93}.
\end{remark}

\subsection{Proof of Theorem \ref{theoremmain}}
Let us start by showing that it is enough to prove that 
\begin{equation} \label{eqnc2}
c_2(\sE) \cdot H^{n-2}=0.
\end{equation}
In this case the existence of the quasi-\'etale cover such that $\nu^{[*]}(\sE)$ is numerically flat follows from \cite[Thm.1.20]{GKP16b}
(cf. also Remark \ref{remarksflat}).

If  additionally $X$ is smooth, then the cover  $\holom{\nu}{\tilde X}{X}$ is \'etale. 
Thus we see that $\nu^*(\sE) = \nu^{[*]} \sE$, which is locally free and numerically flat. 
Since $\mu$ is \'etale this implies that $\sE$ itself is locally free and numerically flat.

Let $\holom{\pi}{P = \PP_0(\sE)}{X}$ be a modification of $\PP(\sE) $ as in Definition \ref{definitiontautological},
and let $\zeta$ be a tautological class on $P$.
By Lemma  \ref{lemmapseff} our assumption implies that $\zeta$ is a pseudoeffective class.
Denote by $X_0 \subset X$ the locus where $X$ is smooth and $\sE$ is locally free.

For the proof of \eqref{eqnc2}, observe first that  
it is sufficient to show that 

{\em Claim.} {\em Let $W \subset B_-(\zeta)$ be an irreducible component.
Then $\codim_X (\pi(W)) \geq 2.$} 

Assuming this claim for the time being, let us see how to conclude:  consider a surface $S \subset  X_0$
cut out by very general elements of a sufficiently high multiple of $H$.
Since $X$ is smooth in codimension two, the surface $S$ is smooth and $\sE$ is locally free in a neighbourhood of $S$.
By construction of $P$ we have $\fibre{\pi}{S} = \PP(\sE_S)$ and $\zeta|_{\fibre{\pi}{S}}=c_1(\sO_{\PP(\sE_S)}(1))$.
Denote by $\zeta_S$ the restriction of $\zeta$ to $\PP(\sE_S)$. Then
$$
B_-(\zeta_S) \subset B_-(\zeta) \cap \PP(\sE_S),
$$
and therefore by the {\it Claim}, every irreducible component of $B_-(\zeta) \cap \PP(\sE|_S)$ is contained
in a fibre of the projection $\PP(\sE_S) \rightarrow S$. 
Thus the restriction of $\zeta_S$ to $B_-(\zeta_S)$ is nef (even ample), so
$\zeta_S$ is nef by \cite[Thm.2]{Pau98}. Since $c_1(\sE_S)=0$ this implies by \cite[Thm.2.5]{DPS94} that 
$$
c_2(\sE) \cdot S = c_2(\sE_S) = 0.
$$
Since the class of $S$ is a positive multiple of $H^{n-2}$ this proves Theorem \ref{theoremmain}. 

{\em Proof of the claim.}
We proceed by induction on $k \in \N^*$. The induction hypothesis
states that given any irreducible component $W \subset B_-(\zeta)$ of codimension at most $k-1$,  the image $\pi(W) \subset X$ has codimension at least $2$.

Note that for $k=1$ the unique subvariety of $P$ having codimension $k-1=0$
is $P$ itself. Since $\zeta$ is pseudoeffective by assumption, the total
space $P$ is not in $B_-(\zeta)$. Hence  the induction hypothesis holds for $k=1$.

Arguing by contradiction we suppose that there exists an irreducible component
$Z \subset B_-(\zeta)$ of codimension $k$ such that 
$$\codim_X(\pi(Z)) \leq 1.$$ 
Fix a number $l \in \N$ such that $\pi_* (\sI_Z(l))$ has positive rank and all
the higher direct images $R^i \pi_* (\sI_Z(l)) \vert X_0$ vanish. 

Let $C \subset X$ be a curve cut out by very general elements of the linear system $|m H|$, where we choose
$m \in \N$ sufficiently large so that the Mehta-Ramanathan theorem (in the form given by Lemma \ref{lemmaMR}) 
applies both for $\sE$ and $S^l \sE$, i.e.,
the restrictions $\sE_C$ and $S^l \sE_C$ are stable. Moreover,
$$ C \cap \pi(W) = \emptyset $$
where $W$ is any of the varieties appearing in
the induction hypothesis. 

Since $\sE_C$ is stable and $c_1(\sE_C)=0$, the restricted tautological class $\zeta_C$ is nef by Proposition \ref{prop:triv}. 
Thus we may apply
Lemma \ref{lemmacomputeclass} with $H_1=\ldots=H_{r-k}=\zeta_C$: there is a real number $a>0$ such that
$$
\zeta_C^r - a \zeta_C^{r-k} (Z \cap \fibre{\pi}{C}) \geq 0.
$$
Since $c_1(\sE_C)=0$, we have $\zeta_C^r=0$. Moreover, $\zeta_C$ being nef, we have
$$
\zeta_C^{r-k} (Z \cap \fibre{\pi}{C}) \geq 0.
$$ 
Consequently
$$
\zeta_C^{r-k} (Z \cap \fibre{\pi}{C}) = 0.
$$
Thus we see that $\zeta|_{Z \cap \pi^{-1}(C)}$ is not big.
Note that this already excludes the possibility that $\pi(Z)$ has codimension one in $X$:
in this case $Z \cap \fibre{\pi}{C}$ would be non-empty and contained in the $\pi$-fibres, so $\zeta_C$
would be ample on $ Z \cap \pi^{-1}(C).$ 

Thus we will assume from now on that $\pi(Z) = X.$ 
Since $S^l \sE_C$ is stable and since $\zeta|_{Z \cap \pi^{-1}(C)}$ is not big, Lemma \ref{lemmacurve} exhibits a 
curve $C' \subset (Z \cap \fibre{\pi}{C})$ such that 
$$
\zeta_C \cdot C'=0
$$
and $f:=\pi_{C'} : C' \rightarrow C$ is \'etale.
Clearly the vector bundle $f^* \sE_C$ is not stable since it has the numerically trivial quotient 
$f^* \sE_C \twoheadrightarrow \zeta|_{C'}$. 
On the other hand,  the vector bundle $\sE_C$ is stable and the holonomy group $H_x(\sE)=H_x(\sE_C)$
is connected by assumption, so the pull-back  $f^* \sE_C$ is stable by \cite[Lemma 6.22]{Dru17}.
Thus we have reached a contradiction. 
\begin{flushright} $\square$ \end{flushright}

\subsection{Proof of Corollary \ref{cor:leaves} }
By \cite[Prop.8.4]{Dru17}, it suffices to show that $\sF^*$ not pseudoeffective (cf. Definition \ref{definitionpseff}). 
Suppose to the contrary that $\sF^*$ is pseudoeffective. 
Since $X$ is simply connected, the algebraic holonomy group $H_x(\sF^*)$ is connected \cite[Prop.4]{BK08}. Hence we may apply Theorem \ref{theoremmain} and
conclude that $\sF^*$ is numerically flat. Thus we have $c_2(\sF) = c_2(\sF^*) = 0$ \cite{DPS94}, contrary to our assumption. 
\begin{flushright} $\square$ \end{flushright}

\section{Proof of Theorems \ref{theoremdecomp} and \ref{theorempseff} }

We start with the proof of the decomposition theorem.

\subsection {Proof of Theorem \ref{theoremdecomp}} 

We will follow the approach of Druel \cite{Dru17}.

By \cite[Thm.1.4]{Dru17} and \cite[Thm. B]{GGK17} there exists a quasi-\'etale finite cover $$\holom{\gamma}{A \times Z}{X}$$ where $A$ is an abelian variety and $Z$ a normal projective variety 
with the following properties. 
\begin{itemize} 
\item $Z$ has at most 
canonical singularities;
\item  $K_Z \simeq \sO_Z$ and  the augmented irregularity $\tilde q(Z)$ is zero;
\item there is a decomposition 
$$
T_Z \simeq \bigoplus_j \sE_j
$$
into reflexive integrable subsheaves $\sE_j$ of rank $m_j$ which are strongly stable in the sense of \cite[Defn.7.2]{GKP16c} for any polarization. 
\end{itemize} 

In order to simplify the notation we will suppose without loss of generality that $X=Z$. By Step 1 of the proof of 
\cite[Prop.4.10]{Dru17} we can suppose without loss of generality that $X$ has terminal $\Q$-factorial singularities.

By \cite[Thm. B and Prop.D]{GGK17} there exists a 
singular Ricci-flat K\"ahler metric, inducing a Riemannian metric $g$, such that for a smooth point $x \in X$, the decomposition
$$ 
T_{X,x} \simeq \bigoplus_j E_{j,x} 
$$ 
corresponds to the decomposition of $T_{X,x}$ 
into irreducible representations according to the action of the differential-geometric holonomy group $G$ of $g$ at $x$. 
Recall that $E_{j,x} := ( \sE_j)_x/m_x (\sE_j)_x$ with $m_x$ the maximal ideal in $x$. 
Moreover the differential-geometric holonomy groups $G_j$ of the direct factors $\sE_j$ are either $SU(m_j)$ or $Sp(\frac{m_j}{2})$, 
the representation $\rho_j: G_j \rightarrow \mbox{GL}(E_{j,x})$ being the standard one.   

By \cite[Prop.4.10]{Dru17}, it suffices to show that the leaves of the foliations $\sE_j$ are algebraic.  
By \cite[Prop. 8.4]{Dru17} it is furthermore sufficient to show that $\sE_j^*$ 
is not pseudoeffective (cf. Definition \ref{definitionpseff}). 

In order to simplify the notation we fix $j \in J$ and write $\sF = \sE_j^*$. We {\it claim} that, up to taking another
quasi-\'etale finite cover, the conditions (a) and (b) of Theorem \ref{theoremmain} are satisfied.
Once the claim is established, we argue by contradiction assume that $\sF$ is pseudoeffective. 
Then Theorem \ref{theoremmain} applies, and there exists a quasi-\'etale cover $g:\tilde X \to X$ 
such that $(g^* \sF)^{**}$ is a numerically flat vector bundle, contradicting \cite[Cor.5.11]{Dru17}.

{\em Proof of the claim.}
Since $\sF$ is strongly stable in the sense of \cite[Defn.7.2]{GKP16c} we may suppose by \cite[Lemma 40]{BK08}, \cite[Lemma 6.20]{Dru17}, 
possibly after passing to a quasi-\'etale cover, that the algebraic holonomy group $H_x(\sF)$ is connected.

Thus it remains to show that all
reflexive symmetric powers $S^{[l]}\sF$ are $H-$stable for some ample divisor $H$. By Lemma \ref{lemmaallstable}
it is sufficient to show that the algebraic holonomy group of $\sF$ is $\mbox{SL}(F_x)$ or $\mbox{Sp}(F_x)$.
We can now follow the proof of \cite[Thm.12.15]{GGK17}: 
by \cite[Prop.12.14]{GGK17}, it suffices to that $S^{[l]} \sF$ is indecomposable for {\it some} $l \geq 2$.
Now observe that the (differential-geometric) Bochner principle \cite[Thm. 8.1]{GGK17} also applies to the direct factors
of the tangent sheaf $T_X$. Thus any direct summand of $S^{[2]}\sF$ would create a $G$-invariant
subspace of the $G$-representation $S^2 F_x$. However, for $G=SU(F_x)$ and $G=Sp(F_x)$,
the induced representation on the second symmetric power is irreducible \cite{Wey49}, \cite[Sect.24.1 and 24.2]{FH91}.
Thus $S^{[2]} \sF$ is indecomposable and so are all the sheaves $S^{[m]} \sF$, completing the proof of Theorem \ref{theoremdecomp}. 
\begin{flushright} $\square$ \end{flushright}

\subsection{Proof of Theorem \ref{theorempseff} }

The claim is invariant under quasi-\'etale covers, so by \cite[Thm. E]{GGK17}, possibly after a
quasi-\'etale cover, the variety $X$ is Calabi-Yau or irreducible symplectic.
By \cite[Prop.12.14, Thm.12.15]{GGK17}
all the symmetric powers $S^{[l]}T_X$ are $H$-stable. 

Arguing by contradiction suppose that $T_X$ is pseudoeffective.  Hence by Theorem \ref{theoremmain}
we have $c_2(X) \cdot H^{n-2} = 0$. But then $X$ is a quasi-\'etale quotient of a torus by \cite[Thm.1.17]{GKP16b}, contradicting our assumption that $T_X$ is strongly stable. 
By duality all the symmetric powers $S^{[l]} \Omega_X^{[1]}$ are $H$-stable, so the proof for $\Omega_X^{[1]}$ is analogous. 
 \begin{flushright} $\square$ \end{flushright}
 
\section{Almost nef sheaves}

We start with some technical preparation. 

\begin{proposition} \label{propositionnumericallyflat}
Let $S$ be a smooth projective surface, 
and let $\sE$ be a locally free sheaf of rank $r$ over $S$ such that $c_1(\sE)=0$.
Suppose that $\sE$ is semistable with respect to some ample divisor $H$.
Suppose also that for some $a>0, b \in \N$ there exists an effective divisor
$D \in | a \zeta + b \pi^* H  |$ such that 
$$
\zeta^r \cdot D \geq 0.
$$
Then $\sE$ is numerically flat. 
\end{proposition}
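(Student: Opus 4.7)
The proof is essentially a Segre-class computation combined with two classical results: Bogomolov's inequality and the characterization of numerically flat bundles via vanishing Chern classes.

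First I would set up the intersection theory on $\PP(\sE)$. Since $\sE$ has rank $r$ and $S$ is a surface, $\PP(\sE)$ has dimension $r+1$, so $\zeta^r \cdot D$ is a $0$-cycle whose degree we can read off by pushing forward to $S$. The projection formula gives
\[
\pi_*(\zeta^r \cdot D) \;=\; a\,\pi_*(\zeta^{r+1}) + b\,\pi_*(\zeta^r)\cdot H.
\]
The standard Segre-class identities $\pi_*(\zeta^{r-1+i}) = s_i(\sE)$, combined with $s_1(\sE) = -c_1(\sE) = 0$ and $s_2(\sE) = c_1(\sE)^2 - c_2(\sE) = -c_2(\sE)$, yield
\[
\pi_*(\zeta^r) = 0, \qquad \pi_*(\zeta^{r+1}) = -c_2(\sE).
\]
Hence $\zeta^r \cdot D = -a\,c_2(\sE)$, where $c_2(\sE)$ is identified with its degree. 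The hypothesis $\zeta^r\cdot D \geq 0$ and $a > 0$ therefore forces
\[
c_2(\sE) \leq 0.
\]

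Next I would invoke Bogomolov's inequality: since $\sE$ is $H$-semistable with $c_1(\sE)=0$, we have
\[
2r\,c_2(\sE) - (r-1)\,c_1(\sE)^2 \;=\; 2r\,c_2(\sE) \;\geq\; 0,
\]
so $c_2(\sE) \geq 0$. Combining the two inequalities gives $c_2(\sE) = 0$ (and we already have $c_1(\sE)=0$).

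Finally, the conclusion follows from the theorem of Demailly-Peternell-Schneider \cite{DPS94}: a locally free sheaf on a smooth projective surface which is $H$-semistable and satisfies $c_1(\sE)=0=c_2(\sE)$ is numerically flat. No step is really the obstacle here — the entire argument is routine once one recognizes that the hypothesis $\zeta^r \cdot D \geq 0$ is precisely engineered to extract the sign of $c_2(\sE)$ opposite to Bogomolov's, pinching it to zero.
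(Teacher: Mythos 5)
Your proof is correct and follows essentially the same route as the paper: the push-forward computation giving $\zeta^r\cdot D=-a\,c_2(\sE)$ (the paper phrases it via the Grothendieck relation rather than Segre classes, but it is the same calculation), then Bogomolov's inequality to pinch $c_2(\sE)=0$. The only quibble is the final attribution: the implication ``semistable with $c_1=c_2=0$ $\Rightarrow$ numerically flat'' is due to Simpson \cite[Cor.3.10]{Sim92} rather than \cite{DPS94}, which proves the converse direction.
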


\begin{proof}
By the classical Bogomolov-Gieseker inequality, \cite[4.7]{Miy87}, we have $c_2(\sE) \geq 0$. 
A theorem of Simpson \cite[Cor.3.10]{Sim92} states moreover that $c_2(\sE) = 0 $ if and only if $\sE$ is numerically flat. 
Thus it suffices to show that $c_2(\sE) \leq 0.$ 

By assumption we have
$$
\zeta^r \cdot D  \geq 0.
$$
On the other hand since $c_1(\sE)=0$ 
the Leray-Hirsch relation $$\zeta^r - \pi^* c_1(\sE) \zeta^{r-1} + \pi^* c_2(\sE) \zeta^{r-2} = 0$$ simplifies to
$\zeta^r = - \pi^* c_2(\sE) \zeta^{r-2}$. Consequently, 
$$
\zeta^r \cdot D = - \pi^* c_2(\sE) \zeta^{r-2} \cdot (a \zeta + b \pi^* H) = - a c_2(\sE) \geq 0.
$$
Since $a>0$ we arrive at $c_2(\sE) \leq 0$.
\end{proof}

\begin{lemma} \label{lemmaeliminatecurves}
Let $S$ be a smooth projective surface, and let $\sE$ be a locally free sheaf of rank $r \geq 2$ on $S$.
Let $(C_i)_{i \in I}$ be an at most countable collection of irreducible curves in $S$.

Let $H$ be an ample divisor on $S$. Then for every $m \gg 0$ there exists a divisor $D \in | \zeta + m (r-1) \pi^* H|$ with the following properties:
\begin{enumerate}
\item Let $\{p_1, \ldots, p_k\} \subset S$ be the non-flat locus of $\pi|_D : D \rightarrow S$.
Then $p_j \not\in C_i$ for every $j \in \{1, \ldots, k\}$ and $i \in I$  
\item For every $i \in I$ the restriction of the tautological class $\zeta|_{D \cap \fibre{\pi}{C_i}}$
is nef. 
\end{enumerate}
\end{lemma}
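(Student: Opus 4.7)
Plan. Via the projection formula $\pi_*(\sO_{\PP(\sE)}(1) \otimes \pi^* \sO_S(m(r-1)H)) \simeq \sE \otimes m(r-1)H$, I identify divisors $D \in |\zeta + m(r-1)\pi^* H|$ with sections $s \in H^0(S, \sE \otimes m(r-1)H)$; the non-flat locus of $\pi|_D$ is exactly the zero scheme $Z(s) \subset S$, which is $0$-dimensional for generic $s$ once $m \gg 0$ (so that $\sE \otimes m(r-1)H$ is globally generated).

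For property (1), fixing a point $p \in S$ imposes $r$ linear conditions on $s$, so the set of sections vanishing somewhere on the irreducible curve $C_i$ forms a subvariety of codimension $\geq r - 1 \geq 1$ in $H^0(S, \sE \otimes m(r-1)H)$. The countable union over $i$ is meager (in the Euclidean topology, by Baire), so a generic $s$ has $Z(s) \cap \bigcup_{i} C_i = \emptyset$; in particular, $s|_{C_i}$ is nowhere vanishing for every $i$.

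For property (2), each nowhere vanishing $s|_{C_i}$ defines an exact sequence
\[
0 \longrightarrow L_i \longrightarrow \sE|_{C_i} \longrightarrow \sQ_i \longrightarrow 0
\]
on $C_i$ with $L_i \simeq (m(r-1)H|_{C_i})^{-1}$, and $D \cap \pi^{-1}(C_i) = \PP(\sQ_i) \hookrightarrow \PP(\sE|_{C_i})$ identifies $\zeta|_{D \cap \pi^{-1}(C_i)}$ with the tautological class of $\PP(\sQ_i)$. Thus (2) is equivalent to nefness of $\sQ_i$ for every $i$. Now $\sQ_i$ fails to be nef iff some subbundle $M \subset \sE|_{C_i}$ of rank $k \in \{2,\dots,r-1\}$ contains $L_i$ with $\deg M > \deg \sE|_{C_i}$; such destabilising $M$'s form finitely many bounded Quot scheme strata per $C_i$ (since $\mu_{\max}(\sE|_{C_i}) < \infty$). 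For each stratum the bad locus of sections is Zariski-closed in $H^0(S, \sE \otimes m(r-1)H)$, of codimension at least $h^0(C_i, (\sE|_{C_i}/M) \otimes m(r-1)H|_{C_i}) - \dim \mathrm{Quot}$. By Riemann-Roch and Serre vanishing, this codimension grows linearly in $m$, while $\dim \mathrm{Quot}$ is $m$-independent and bounded by Grothendieck's boundedness theorem. Hence each bad locus is proper for $m$ large, and a generic $s$ in the complement of the countable union of bad loci satisfies both (1) and (2).

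The main obstacle is securing the choice of $m$ uniformly over the countable collection $(C_i)_{i \in I}$. This rests on Grothendieck's boundedness for the Quot schemes parametrising destabilising subbundles of the various $\sE|_{C_i}$, together with Serre vanishing on $S$ to guarantee that the restriction map $H^0(S, \sE \otimes m(r-1)H) \twoheadrightarrow H^0(C_i, \sE|_{C_i} \otimes m(r-1)H|_{C_i})$ is surjective for $m \gg 0$; only then do the codimension estimates for each bad locus kick in simultaneously, allowing the Baire-category argument over the countable family to conclude.
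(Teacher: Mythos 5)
Your construction of $D$ is genuinely different from the paper's: you take a single very general section $s$ of $\sE\otimes \sO_S(m(r-1)H)$, so that $D=\PP(\sE|_{C_i}/L_i)$ with $L_i$ a very negative sub-line-bundle, and you must then rule out destabilising subsheaves $M\subset\sE|_{C_i}$ containing $L_i$. The identification of $|\zeta+m(r-1)\pi^*H|$ with such sections, the description of the non-flat locus as $Z(s)$, and the reduction of (2) to nefness of $\sQ_i$ are all correct. But the step you yourself flag as the main obstacle is a genuine gap, not a technicality. Your codimension estimate for the bad locus attached to $(C_i,M)$ is $h^0\bigl(C_i,(\sE|_{C_i}/M)\otimes m(r-1)H|_{C_i}\bigr)-\dim\mathrm{Quot}$, and it presupposes that the restriction map $H^0(S,\sE\otimes m(r-1)H)\to H^0(C_i,\sE|_{C_i}\otimes m(r-1)H|_{C_i})$ is surjective. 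Both ingredients require $m$ large \emph{depending on} $C_i$: surjectivity needs $H^1(S,\sE\otimes \sO_S(m(r-1)H-C_i))=0$, which fails for fixed $m$ once $\deg_H C_i$ is large; the genus of $C_i$, which enters $h^0$ with a negative sign via Riemann--Roch, grows roughly like $(H\cdot C_i)^2$; and $\dim\mathrm{Quot}$ of the destabilising quotients of $\sE|_{C_i}$ is bounded for each fixed curve but not uniformly over a countable family of curves of unbounded degree. Grothendieck boundedness and Serre vanishing, as you invoke them, are per-curve statements and do not yield a single $m$ for which every bad locus is proper; without that, the Baire-category step has nothing to intersect. (Secondary issues: the Quot-scheme analysis of subbundles tacitly assumes $C_i$ smooth, whereas the $C_i$ are arbitrary irreducible curves, and the exclusion of rank-one $M$ needs a word.)

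The paper's proof sidesteps exactly this uniformity problem by working with the dual bundle: fix $m$ once so that $\sE^*\otimes\sO_S(mH)$ is globally generated by a space of sections $V$. Then for \emph{every} $C_i$ the restriction $(\sE^*\otimes\sO_S(mH))|_{C_i}$ is globally generated by $V|_{C_i}$ with no further hypothesis on $C_i$, so by the elementary Lemma on general sections a general $(r-1)$-tuple in $V^{r-1}$ defines a subbundle $\sO_{C_i}^{\oplus r-1}\hookrightarrow(\sE^*\otimes\sO_S(mH))|_{C_i}$ --- a dense open condition whose validity does not deteriorate with $\deg C_i$. Dualising gives $\sE\to\sO_S(mH)^{\oplus r-1}$, surjective along each $C_i$ and outside finitely many points of $S$, and $D=\PP(\mathrm{im})$ satisfies $D\cap\pi^{-1}(C_i)=\PP(\sO_{C_i}(mH)^{\oplus r-1})$, on which $\zeta$ is ample for free; no Quot schemes, no Riemann--Roch, no restriction-surjectivity. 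If you want to rescue your single-section approach you would need to replace the dimension counts by a condition that, like global generation, automatically restricts well to every curve; as written the argument does not close.
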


Recall first the following basic lemma:

\begin{lemma} \label{lemmaglobalsections}
Let $Z$ be a projective variety of dimension $d$, and let $\E$ be a locally free sheaf of rank $r$ on $Z$. Let $V \subset H^0(Z, \sE)$ be a linear subspace such that $\sE$ is generated by the
$V$, i.e. we have a surjective evaluation morphism
$$
\sO_Z \otimes V \rightarrow \sE.
$$
a) If $r>d$, then a general choice of $r-d$ elements of $V$ defines a subbundle
$$
\sO_Z^{\oplus r-d} \hookrightarrow \sE.
$$
b) If $r \geq d$, then a general choice of $r-d+1$ elements of $V$ defines an injective morphism
$$
\sO_Z^{\oplus r-d+1} \rightarrow \sE,
$$
that is a subbundle in the complement of finitely many points.
\end{lemma}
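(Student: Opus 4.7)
The plan is to deduce both parts of the lemma from a single dimension count on an incidence variety in $Z \times V^k$, where $k=r-d$ for part a) and $k=r-d+1$ for part b).

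First I would introduce
$$
W_k := \{(z,s_1,\dots,s_k) \in Z \times V^k \ | \ s_1(z),\dots,s_k(z) \text{ are linearly dependent in } E_z\},
$$
where $E_z := \sE_z/m_z\sE_z$. I would analyse the two projections of $W_k$ to $Z$ and to $V^k$. Over a fixed $z \in Z$, the generation hypothesis gives that the evaluation $V \to E_z$ is surjective, hence so is its $k$-fold product $V^k \to E_z^k$. Therefore the fibre of $W_k \to Z$ over $z$ is the preimage under a surjective linear map of the classical determinantal locus of rank-deficient $k$-tuples in $E_z^k \cong (\bC^r)^k$. That locus is well known to have codimension $r-k+1$, so the fibre has codimension $r-k+1$ in $V^k$. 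Consequently
$$
\dim W_k \leq d + \dim V^k - (r-k+1).
$$

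For part a), I take $k = r-d$, so the right hand side is $\dim V^k - 1$, i.e. $W_k \to V^k$ is not dominant. A general $(s_1,\dots,s_{r-d}) \in V^{r-d}$ therefore avoids the image, meaning the $s_i(z)$ are linearly independent at every $z \in Z$. The corresponding evaluation $\sO_Z^{\oplus r-d} \to \sE$ is then fibrewise injective everywhere, which on a locally free sheaf forces it to be a subbundle. For part b), I take $k = r-d+1$, so the dimension estimate gives $\dim W_k \leq \dim V^k$, i.e. the projection to $V^k$ is at worst generically finite. For a general tuple $(s_1,\dots,s_{r-d+1}) \in V^{r-d+1}$ the fibre of $W_k \to V^k$ is therefore zero-dimensional, hence finite. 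Away from this finite set the induced map $\sO_Z^{\oplus r-d+1} \to \sE$ is fibrewise injective and hence a subbundle; since $\sE$ is torsion free, generic fibrewise injectivity upgrades to injectivity as a morphism of sheaves on all of $Z$.

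The step that requires the most care is the codimension count for the fibre of $W_k \to Z$: one must verify that the surjectivity of $V \to E_z$ really does allow one to pull back the codimension $r-k+1$ of the matrix rank-deficiency locus without loss, and that the expected-dimension bound holds uniformly in $z$ so that summing with $\dim Z = d$ is legitimate (there is no component of $W_k$ where the fibres jump, because the bound comes from a single algebraic condition on $V^k$ at each $z$). Once that bookkeeping is settled, both conclusions fall out immediately from comparing $\dim W_k$ with $\dim V^k$. No further input from the paper is needed; in particular this is a purely formal Bertini-type statement independent of the stability theory developed earlier.
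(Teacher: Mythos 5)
Your proof is correct, and it takes a somewhat different route from the paper's. The paper disposes of the lemma in two lines by quoting the classical rank-one facts (a general section of a globally generated rank-$r$ bundle on a $d$-dimensional variety is nowhere vanishing when $r>d$, and vanishes in only finitely many points when $r=d$, cf.\ \cite[II, Ex.8.2]{Har77}) and then inducting on the number of sections, passing to the quotient bundle at each step. You instead run a single incidence-variety computation for the whole $k$-tuple at once: the fibre of $W_k\to Z$ over each $z$ is the preimage, under the surjective linear map $V^k\to E_z^k$, of the determinantal locus of rank-deficient $r\times k$ matrices, which has codimension $r-k+1$; since this codimension is uniform in $z$, every component of $W_k$ has dimension at most $d+k\dim V-(r-k+1)$, and comparing with $\dim V^k$ gives a) (non-dominance, using that $W_k\to V^k$ is proper because $Z$ is projective, so the image is a proper closed subset) and b) (generically finite fibres). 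This is essentially the same dimension count that underlies the classical facts the paper cites, but your packaging avoids the induction and is self-contained. Two small points of hygiene: in b), the reason the generically injective map $\sO_Z^{\oplus r-d+1}\to\sE$ is injective as a sheaf map is that its kernel is a torsion subsheaf of the torsion-free sheaf $\sO_Z^{\oplus r-d+1}$ on the integral variety $Z$ --- torsion-freeness of the \emph{source}, not of $\sE$, is what matters; and both your argument and the statement of b) implicitly require $d\geq 1$ (for $d=0$ one would have $k=r+1>r$ and the determinantal locus is everything), which is harmless since the lemma is only applied to curves and surfaces.
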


\begin{proof}
It is well-known \cite[II,Ex.8.2]{Har77} that if $r>d$, then a general section does not vanish, so a) follows by induction
on $r-d$. It is also well-known that if $r=d$, then a general section vanishes only in finitely many
points, so b) follows from a) and induction.
\end{proof}

\begin{proof}[Proof of Lemma \ref{lemmaeliminatecurves}]
For $m \gg 0$ we know by Serre's theorem that the sheaf $\sE^* \otimes \sO_S(mH)$ 
is globally generated; we denote by $V$ the space of global sections. 

For every $i \in I$, the restricted vector bundle $(\sE^* \otimes \sO_S(mH))|_{C_i}$ is generated by global sections of $V_i := V|_{C_i}$ (i.e. those global sections of $(\sE^* \otimes \sO_S(mH))|_{C_i}$ that lift
to global sections on $S$). By Lemma \ref{lemmaglobalsections} for a general choice of elements
$$
s_{1,i}, \ldots s_{r-1, i} \in V_i = V
$$
we obtain a subbundle
$$
\sO_{C_i}^{\oplus r-1} \rightarrow (E^* \otimes \sO_S(mH))|_{C_i}.
$$
Since there are only countably many curves $C_i$ we can thus fix very general sections
$$
s_{1}, \ldots, s_{r-1} \in V
$$
inducing an injective morphism
$$
s_{1} \oplus \ldots \oplus s_{r-1}: \sO_S^{\oplus r-1} \rightarrow \sE^* \otimes \sO_S(mH),
$$
such that the 
the restriction
$$
s_{i,1} \oplus \ldots \oplus s_{i,r-1}:  \sO_{C_i}^{\oplus r-1} \rightarrow (\sE^* \otimes \sO_S(mH))|_{C_i}.
$$
defines a subbundle.

Moreover by part b) of Lemma \ref{lemmaglobalsections}
the map is injective in the complement of finitely many points $p_1, \ldots, p_k$.
Dualising and tensoring with $\sO_S(mH)$ we obtain a morphism
$$
\sE \rightarrow \sO_S(mH)^{\oplus r-1}
$$ 
that is surjective in the complement of finitely many points and the restriction
$$
\sE|_{C_i} \rightarrow \sO_{C_i}(mH)^{\oplus r-1}
$$
is surjective. In particular the restriction of $\zeta$ to the divisor $\PP(\sO_{C_i}(mH)^{\oplus r-1}) \subset \PP(\sE|_{C_i})$ is nef (even ample). The map $\sE \rightarrow \sO_S(mH)^{\oplus r-1}$ is in general not surjective (so does not define a subvariety of $\PP(\sE)$) however if we denote by $\sH$ its image, then
$\PP(\sH) \subset \PP(\sE)$ is a prime divisor $D$ which is easily seen to be in the
linear system  $| \zeta + m (r-1) \pi^* H |$. 
\end{proof}

\begin{proof}[Proof of Theorem \ref{theorem:almostnef}] Let $H$ be a very ample line bundle on $X$.

{\em Step 1. The case $\dim X=2$.} Then by assumption $X$ is smooth, hence the reflexive sheaf $\sE$ is locally free.
By Proposition \ref{propositionnumericallyflat} it is sufficient to find a divisor $D \in |\zeta + l \pi^* H|$ 
such that $\zeta^r \cdot D \geq 0$.

Since $\sE$ is almost nef and $\dim X = 2$, there exists an at most countable collection of curves $C_i \subset X$
such that $\sE_{C_i}$ is not nef. Consequently if $C \subset \PP(\sE)$ is a curve
such that $\zeta \cdot C < 0$ then $C$ maps onto one of the curves $C_i$.
By Lemma \ref{lemmaeliminatecurves} we can find a divisor $D \in |\zeta + \pi^* (r-1)m H|$
such that  $\zeta|_{D \cap \fibre{\pi}{C_i}}$ is nef for all $i$. Thus $\zeta_D$ is nef,
in particular 
$$
\zeta^r \cdot D = (\zeta|_D)^r \geq 0.
$$

{\em Step 2. Reduction to the case $\dim X=2$.} 
Let $S \subset X$ be a surface cut out by very general hyperplane sections $D_i \in | H |$. Then the restriction $\sE_S$ to $S$
is almost nef and $\det \sE_S$ is numerically trivial. Since $X$ is smooth in codimension two, the surface $S$ is smooth.
Since $\sE$ is reflexive, the restriction $\sE_S$ is locally free. By Step 1 we know that $\sE_S$ is numerically flat,
so $c_2(\sE_S)=0$ by \cite[Thm.2.5]{DPS94}. Since the class of $S$ is a positive multiple of $H^{\dim X-2}$, we obtain
that $c_2(\sE) \cdot H^{\dim X-2}=0$.
Now \cite[Thm.1.20]{GKP16b} (cf. also Remark \ref{remarksflat}) provides a quasi-\'etale cover $\gamma: \tilde X \to X$ such that $\tilde \sE := \gamma^*(\sE)^{**}$ is locally free and numerically flat.
\end{proof}

Using that almost nefness and numerically flatness of vector bundles 
are invariant under a birational morphism, the following variant of Theorem \ref{theorem:almostnef} follows by passing
to a desingularisation.

\begin{corollary} 
Let $X$ be a normal projective variety, and let $\sE$ be a locally free sheaf on $X$ such that $c_1(\sE) = 0.$ 
If $\sE$ is almost nef, then  $\sE$ is numerically flat. 
\end{corollary}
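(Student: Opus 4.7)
The plan is to reduce to Theorem \ref{theorem:almostnef} by passing to a desingularisation. Let $\mu:\tilde X\to X$ be a resolution of singularities, so that $\tilde X$ is smooth, hence in particular $\Q$-factorial, klt and smooth in codimension two. Set $\tilde\sE:=\mu^{*}\sE$, which is locally free on $\tilde X$ with $c_{1}(\tilde\sE)=\mu^{*}c_{1}(\sE)=0$. The first step is to show that $\tilde\sE$ is almost nef.

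To verify almost nefness, let $(S_{j})_{j\in J}$ be the at most countable family of proper subvarieties of $X$ outside of which $\sE$ restricts to a nef bundle, and let $E\subset\tilde X$ be the exceptional locus of $\mu$. Consider a curve $\tilde C\subset\tilde X$ with $\tilde\sE|_{\tilde C}$ not nef. If $\tilde C\subset E$ there is nothing to prove. Otherwise $\mu(\tilde C)$ is an irreducible curve $C\subset X$ and $\tilde\sE|_{\tilde C}=(\mu|_{\tilde C})^{*}(\sE|_{C})$; since pullback along a surjective morphism preserves nefness, $\sE|_{C}$ cannot be nef either, so $C\subset\bigcup_{j}S_{j}$ and hence $\tilde C\subset\mu^{-1}(\bigcup_{j}S_{j})$. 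Thus the countable family $E\cup\bigcup_{j}\mu^{-1}(S_{j})$ witnesses the almost nefness of $\tilde\sE$.

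Now Theorem \ref{theorem:almostnef} applies to $\tilde\sE$ on $\tilde X$ and produces a quasi-\'etale cover $\gamma:Y\to\tilde X$ such that $\gamma^{*}\tilde\sE$ is a numerically flat vector bundle. It remains to descend numerical flatness first along $\gamma$ and then along $\mu$. Both descents rest on the fact that for a surjective morphism $f:Z'\to Z$ and a line bundle $L$ on $Z$, the bundle $L$ is nef iff $f^{*}L$ is nef; applied to $\sO_{\PP(\mu^{*}\sE)}(1)$ on $\PP(\mu^{*}\sE)$ with its finite surjective cover from $\PP(\gamma^{*}\mu^{*}\sE)$, this gives nefness of $\mu^{*}\sE$, and the analogous argument applied to the surjective birational map $\PP(\mu^{*}\sE)\to\PP(\sE)$ gives nefness of $\sE$. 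The same argument for $\sE^{*}$ shows that $\sE^{*}$ is nef as well, so $\sE$ is numerically flat.

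The only potentially delicate step is the descent argument, but since the question is entirely about nefness of the tautological line bundle on the projectivisation and since all the morphisms involved are either finite surjective or birational surjective, the classical behaviour of nefness of line bundles under such maps makes this routine; the heart of the proof is really Theorem \ref{theorem:almostnef} together with the observation that resolving $X$ costs us nothing, because almost nefness pulls back through $\mu$ without loss.
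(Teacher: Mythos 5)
Your proposal is correct and follows essentially the same route as the paper, which likewise passes to a desingularisation and invokes Theorem \ref{theorem:almostnef} together with the invariance of almost nefness and numerical flatness under birational morphisms. Your write-up simply makes explicit the details the paper leaves implicit (the countable family $E\cup\bigcup_j\mu^{-1}(S_j)$ witnessing almost nefness upstairs, and the descent of nefness of the tautological bundle along the finite étale cover $\gamma$ and the birational map $\PP(\mu^*\sE)\to\PP(\sE)$), all of which are standard and check out.
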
 

\begin{proof}[Proof of Theorem \ref{corollaryalmostnef}] 
Assume that $T_X$ is almost nef. 
By Theorem \ref{theorem:almostnef}, there exists a quasi-\'etale cover $\gamma: \tilde X \to X$ such that 
$\gamma^{[*]}(T_X) $ is locally free and numerically flat. Since $T_{\tilde X} =  \gamma^{[*]}(T_X), $ we conclude by \cite[Thm. 6.1]{GKKP11},  that $\tilde X$ is smooth and
$$ c_2(\tilde X) = 0.$$ 
Hence by Yau's theorem $\tilde X$ is an \'etale quotient of a torus. 
\end{proof}

\newcommand{\etalchar}[1]{$^{#1}$}

\end{document}